\theoremstyle{plain}
\newtheorem{theorem}{Theorem}[section]
\newtheorem{proposition}[theorem]{Proposition}
\newtheorem{corollary}[theorem]{Corollary}
\newtheorem{assumption}[theorem]{Assumptions}
\theoremstyle{definition}
\newtheorem{example}[theorem]{Example}
\newtheorem{definition}[theorem]{Definition}
\newtheorem{remark}[theorem]{Remark}
\newtheorem{notation}[theorem]{Notation}
\newcommand{\N}{{\mathbb N}}
\newcommand{\Q}{{\mathbb Q}}
\newcommand{\X}{{\mathbb X}}
\newcommand{\Sc}{{\mathcal S}}
\newcommand{\U}{{\mathcal U}}
\newcommand{\x}{{\mathbf x}}
\newcommand{\aaa}{{\mathbf a}}
\newcommand{\bfalpha}{{\bm \alpha}}
\newcommand{\f}{{\mathbf f}}
\def\LT{\mathop{\rm LT}\nolimits}
\def\Spec{\mathop{\rm Spec}\nolimits}
\def\y1{\mathbf{y}^{(1)}}
\def\To{\longrightarrow}
\def\TTo#1{\mathop{\longrightarrow}\limits^{#1}}
\let\epsilon=\varepsilon
\let\rho=\varrho
\let\phi=\varphi
\let\To=\longrightarrow
\def\TTo#1{\mathop{\longrightarrow}\limits ^{#1}}
\def\longiso{\,\smash{\TTo{\lower 7pt\hbox{$\scriptstyle\sim$}}}\,}
\def\tfrac #1#2{{\textstyle\frac{#1}{#2}}}
\def\cocoa{\mbox{\rm
C\kern-.13em o\kern-.07 em C\kern-.13em o\kern-.15em A}}
\def\apcocoa{\mbox{\rm
A\kern-0.13em p\kern -0.07em C\kern-.13em o\kern-.07 em C\kern-.13em
o\kern-.15em A}}
\begin{document}

\begin{frontmatter}

\title{Hyperplane Sections, Gr\"obner Bases, and Hough Transforms}
\author{Lorenzo Robbiano}
\address{\scriptsize Dipartimento di Matematica, 
\ Universit\`a di Genova, \ Via
Dodecaneso 35,\
I-16146\ Genova, Italy}
\ead{robbiano@dima.unige.it}

\begin{abstract}
Along the lines of~\cite{BR} and ~\cite{R},  the purpose 
of this paper is twofold. 
In the first part we  concentrate on hyperplane sections 
of algebraic schemes,  and present results for determining when 
Gr\"obner bases pass to the quotient and when they can be lifted.  
The main difficulty to overcome is the fact that we 
deal with non-homogeneous ideals.  As a by-product we hint at a 
promising technique for computing implicitization efficiently.

In the second part of the paper we deal with
families of algebraic schemes and the Hough transforms,
in particular we compute their dimension, and show that
in some interesting cases it is zero. Then
we concentrate on their hyperplane sections.
Some results and examples  hint at the possibility of reconstructing external
and internal surfaces of human organs
from the parallel cross-sections obtained by tomography.
\end{abstract}

 \begin{keyword}
 Hyperplane Section \sep  Gr\"obner basis \sep Family of schemes 
 \sep  Hough transform.
\MSC[2010]{
{13P25, 13P10, 14D06}
}
\end{keyword}

\end{frontmatter}

\begin{flushright}
{\it these days, even the most pure\\ and abstract mathematics\\ is in danger to be applied}\\
{\rm (Anonymous)}
\end{flushright}

\section*{Introduction}

About 50 years ago the technique of Hough Transforms (HT for short) 
was introduced  with the purpose of recognizing
special curves inside images (see~\cite{H}); 
it has subsequently become widely used and generalized in many ways,
notwithstanding the fact that its range of application is rather limited.   
An extension was presented in~\cite{BR} where the HT was introduced
in the wider context of families of algebraic schemes.  
This paved the way to detecting more complicated objects,
and offered the prospect of using algebraic geometry to help other 
scientists, in particular doctors, in the challenge
of  recognizing and  reconstructing images from various types of
tomography.  

In this paper we commence our investigation by  considering  
hyperplane sections.
It is well-known how to use the {\tt DegRevLex} term ordering to
relate Gr\"obner bases of a homogenous ideal to Gr\"obner bases 
of its quotient modulo a linear form (see Proposition~\ref{genhyper}).  
However, that result makes essential use of
the homogeneity.  Since we have in mind ``inhomogeneous applications"
we develop a theory which 
works in the inhomogenous case.  We prove  
Theorem~\ref{mainhyper} which shows
how Gr\"obner bases pass to the quotient,  and
Theorem~\ref{lifting} which establishes a criterion for lifting Gr\"obner bases.
A consequence of this result is Theorem~\ref{commonliftingGroebner}
which describes a class of instances where  good  liftings of
Gr\"obner bases can be obtained.  A confirmation of its usefulness
comes from some preliminary experiments on computing
implicitizations (see Example~\ref{sectparams}).

The second section concentrates on families of schemes and 
Hough transforms.  
Given a family of algebraic schemes, there is a non-empty Zariski 
open set over which the universal
reduced $\sigma$-Gr\"obner basis $G$ specializes
to the reduced $\sigma$-Gr\"obner basis of the fibers, hence
we get a parametrization of the fibers via the non-constant
coefficients of $G$ (see Proposition~\ref{coefflistuniqe}). The scheme
which parametrizes the fibers is called the $\sigma$-scheme,
and from the theory of Gr\"obner fans (see~\cite{MR}) we deduce that each
family has only a finite set of $\sigma$-schemes (see Corollary~\ref{H-fan}).
Subsection~\ref{HyperSecFam} applies the 
results about hyperplane sections to families of algebraic schemes. 

After recalling our definition of HT 
(see Definition~\ref{Hough}), 
we show how to compute its dimension.  
Proposition~\ref{dimensionoffiber} and some
examples illustrate some 
cases when the HTs are zero-dimensional schemes.
Finally, Example~\ref{recHough}  
shows how  the equation of a surface can be reconstructed from the 
equations of some of its hyperplane sections. 
The important remark here is that  the equations of these  curves
can be obtained obtained using Hough transforms.

Why did we mention other scientists, in particular doctors?
Suppose that special curves have 
been recognized in the tomographic sections of a human organ. 
Our results hint at the possibility of reconstructing 
a surface whose cross-sections coincide with the recognized curves.  
It could be the contour of a vertebra or a kidney.  
However, more difficulties arise, in particular the fact that 
in this context all the data are \textit{inexact}. More investigation is needed
which  is \textit{exactly} what every researcher loves.

\begin{assumption}\label{general}
Throughout the paper we use notation and definitions 
introduced in~\cite{KR1} and~\cite{KR2}. 
Moreover, we always assume that  for
every term ordering~$\sigma$ on
$\mathbb T^n=\mathbb T(x_1,\dots, x_n)$ we have 
${x_1>_\sigma\cdots >_\sigma x_n}$, and
we call  $\hat{\sigma}_i$ (or simply  $\hat{\sigma}$ 
if $i$ is clear from the context) the restriction of $\sigma$ to 
the monoid $\mathbb T(x_1,\dots, x_{i-1},x_{i+1},\dots, x_n)$.
\end{assumption}

My warmest thanks go to J. Abbott, M. C. Beltrametti, 
M. Piana, M. L. Torrente.
With them I had the opportunity to share my ideas, and from 
them I received valuable feed-back.

\tableofcontents

\section{Hyperplane Sections and Gr\"obner Bases}

In this section we conduct our investigation into hyperplane sections,
and establish some results about Gr\"obner bases.  We recall several facts
from the well-known homogeneous case and confront them with new
results for the inhomogenous case.

\subsection{The Homogeneous Case}

\begin{assumption}\label{hyperplane}
Let $P^{\mathstrut}=K[x_1,\dots,x_n]$, 
let $i \in \{1,\dots, n\}$, let $c_1, \dots,c_{i-1},c_{i+1}\dots c_n \in K$,
and let $L=x_i-\ell\ $ be the linear polynomial
with $\ell =  \sum_{j\ne i} c_jx_j$. We identify 
$P/(L)$ with the  ring ${\hat{P} = K[x_1, \dots, 
x_{i-1},x_{i+1},\dots,x_n]}$ via the isomorphism
defined by $\pi_L(x_i) = \ell$, $\pi_L(x_j) = x_j$
for $j\ne i$. 
\end{assumption}

\begin{definition}\label{defdegrevtype}
For $t=x_1^{a_1}x_2^{a_2}\cdots x_n^{a_n}$
we write $a_i = \log_{x_i}(t)$.
For $i\in\{1,\dots,n\}$ we say
that a term ordering~$\sigma$ is of
{\bf {\boldmath $x_i$\unboldmath}-DegRev type}
if it is degree-compatible, and for every pair of terms
$t,t' \in\mathbb T^n$ which satisfy $\deg(t)=\deg(t')$ and 
$\log_{x_i}(t) < \log_{x_i}(t')$, then $t>_\sigma t' $.
\end{definition}

\noindent
The usual $\tt DegRevLex$ ordering is of $x_n$-{\tt DegRev} type, 
and by suitably modifying its definition we see that 
for every $i$ there exist term orderings of $x_i$-{\tt DegRev} type. 
For an in-depth analysis of this topic see~\cite[Section 4.4]{KR2}.

We observe that $\pi_L=\rho\circ \theta$ where 
$\theta : P \To P$ is defined 
by $\theta(x_i) = x_i+\ell$,  $\ \theta(x_j) = x_j$ for $j \ne i$
while 
$\rho : P\To \hat{P}$ is defined by $\rho(x_i) = 0$,
$\ \rho(x_j) = x_j\,$ for $j \ne i$.

\begin{proposition}\label{genhyper}{\bf (Homogeneous 
Hyperplane Sections)}\\
Under Assumptions~\ref{hyperplane}, we let 
let $\sigma$ be a  term 
ordering of  ${x_i}_{\mathstrut}$-{\tt DegRev} type
on $P$, let $I$ be a homogeneous ideal in $P$,
and let $G=\{G_1,\dots, G_s\}$ be the 
reduced $\sigma$-Gr\"obner basis of $\theta(I)$.
Then $\rho(G) = \{\rho(G_1),\dots,\rho(G_s) \} \setminus \{0\}$
is the reduced $\hat{\sigma}$-Gr\"obner basis of~$\pi_L(I)$.
\end{proposition}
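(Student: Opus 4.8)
The plan is to reduce the statement to the classical behaviour of \texttt{DegRevLex}-type orderings with respect to the last variable, exploiting that $\theta$ is a degree-preserving automorphism of $P$ and that $\rho$ is exactly ``reduction modulo $x_i$''. First I would observe that since $\sigma$ is degree-compatible and $I$ is homogeneous, $\theta(I)$ is again a homogeneous ideal: indeed $\theta$ sends a homogeneous polynomial of degree $d$ to a homogeneous polynomial of degree $d$, because $\theta(x_i)=x_i+\ell$ with $\ell$ linear. Hence $\theta(I)$ has a well-defined reduced $\sigma$-Gr\"obner basis $G=\{G_1,\dots,G_s\}$ consisting of homogeneous polynomials. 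The key elementary fact to isolate is this: if $t,t'\in\mathbb T^n$ have the same degree and $t$ is divisible by $x_i$ while $t'$ is not, then $t>_\sigma t'$ because $\log_{x_i}(t)>\log_{x_i}(t')=0$ (this is the $x_i$-\texttt{DegRev} hypothesis). Consequently, for a homogeneous polynomial $H\in P$, either $x_i\mid\LM_\sigma(H)$, in which case $\rho(H)=0$ since all terms of $H$ of that degree that survive under $\rho$ are $\sigma$-smaller than terms divisible by $x_i$ — wait, more carefully: if $\LM_\sigma(H)$ is \emph{not} divisible by $x_i$, then $\rho$ kills every term of $H$ divisible by $x_i$ and leaves the rest untouched, so $\LM_{\hat\sigma}(\rho(H))=\LM_\sigma(H)$; and if $\LM_\sigma(H)$ \emph{is} divisible by $x_i$, one argues (using homogeneity plus the type hypothesis) that \emph{every} maximal-degree term of $H$ with nonzero $\rho$-image would have to be $\sigma$-larger than those divisible by $x_i$, forcing $\rho(H)=0$ only when all top-degree terms are divisible by $x_i$; in general $\rho(H)$ then picks up its leading term from the largest surviving lower-degree component. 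The clean statement I actually need is: $x_i\nmid\LM_\sigma(H)\iff\rho(H)\ne 0$ with $\LM_{\hat\sigma}(\rho(H))=\LM_\sigma(H)$, and this holds for homogeneous $H$ precisely because $\rho$ only deletes $x_i$-divisible terms and those are always $\sigma$-dominated within a fixed degree.

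**Main steps.** With that lemma in hand I would proceed as follows. (1) Identify $\pi_L(I)$ with $\rho(\theta(I))$ via $\pi_L=\rho\circ\theta$, so we must show $\rho(G)\setminus\{0\}$ is the reduced $\hat\sigma$-Gr\"obner basis of $\rho(\theta(I))$. (2) Show $\rho(G)\setminus\{0\}$ generates $\rho(\theta(I))$: this is immediate since $G$ generates $\theta(I)$ and $\rho$ is a surjective ring homomorphism onto $\hat P$. (3) Show the leading term ideal matches, i.e. $\LT_{\hat\sigma}(\rho(\theta(I)))=(\LM_{\hat\sigma}(\rho(G_j)) : \rho(G_j)\ne 0)$. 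For ``$\supseteq$'' use step (2). For ``$\subseteq$'', take $0\ne\bar f\in\rho(\theta(I))$; lift it to a homogeneous-components argument — pick $f\in\theta(I)$ with $\rho(f)=\bar f$, reduce $f$ by $G$ so that $\LM_\sigma(f)\in\LT_\sigma(\theta(I))$, hence $\LM_\sigma(f)=\LM_\sigma(G_j)\cdot t$ for some $j$ and some term $t$; now invoke the lemma: since we may assume (by replacing $f$) that $x_i\nmid\LM_\sigma(f)$ — because any $x_i$-divisible leading term can be reduced away using a $G_j$ with $x_i\mid\LM_\sigma(G_j)$, whose $\rho$-image is irrelevant — we get $\LM_{\hat\sigma}(\bar f)=\LM_\sigma(f)=\LM_\sigma(G_j)t$ with $x_i\nmid\LM_\sigma(G_j)$, so $\LM_\sigma(G_j)=\LM_{\hat\sigma}(\rho(G_j))$ divides $\LM_{\hat\sigma}(\bar f)$. (4) Conclude $\rho(G)\setminus\{0\}$ is a $\hat\sigma$-Gr\"obner basis. (5) Check it is \emph{reduced}: the $\LM_\sigma(G_j)$ not divisible by $x_i$ are pairwise non-divisible (inherited from $G$ reduced), their $\rho$-images are the same monomials, and each non-leading term of $\rho(G_j)$ is the $\rho$-image of a non-leading term of $G_j$, none of which is divisible by any $\LM_\sigma(G_k)$ with $x_i\nmid\LM_\sigma(G_k)$; finally normalize leading coefficients to $1$ (they already are, $G$ being reduced).

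**Main obstacle.** The delicate point is step (3), specifically the claim that one may always arrange, after reducing modulo $G$, that the representative $f\in\theta(I)$ of $\bar f$ has $x_i\nmid\LM_\sigma(f)$, and that this leading term then descends correctly to $\LM_{\hat\sigma}(\bar f)$. The subtlety is that $f$ need not be homogeneous in a way that makes $\rho$ act term-by-term transparently on the leading term — but $\theta(I)$ \emph{is} homogeneous, so I can work degree by degree: write $f=\sum_d f_d$ with $f_d\in\theta(I)$ homogeneous, apply the lemma to each $f_d$, and track which degree contributes $\LM_{\hat\sigma}(\rho(f))$. I expect the bookkeeping of ``which $G_j$ has $x_i$-free leading term'' versus ``which has $x_i$-divisible leading term'' to be where the $x_i$-\texttt{DegRev} hypothesis does all its work, and getting that dichotomy stated cleanly as a preliminary lemma is the crux; everything else is standard Gr\"obner-basis manipulation.
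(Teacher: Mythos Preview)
Your argument is correct and, at its core, makes the same opening move as the paper: you observe that $\theta$ is a graded automorphism, so $\theta(I)$ is homogeneous, and then the problem becomes the classical one of relating the reduced $\sigma$-Gr\"obner basis of a homogeneous ideal to that of its image modulo $x_i$ for an $x_i$-\texttt{DegRev} ordering. The difference is purely one of packaging. The paper's proof stops there and invokes the standard result (Corollary~4.4.18 in \cite{KR2}) as a black box; you instead unpack that result and reprove it from first principles via the dichotomy ``$x_i\mid\LM_\sigma(H)\iff x_i\mid H$ for homogeneous $H$'' and the leading-term descent $\LM_{\hat\sigma}(\rho(H))=\LM_\sigma(H)$ when $x_i\nmid\LM_\sigma(H)$. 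Your route is more self-contained and makes explicit exactly where the $x_i$-\texttt{DegRev} hypothesis and homogeneity are consumed; the paper's route is shorter but relies on the reader knowing the cited corollary.

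One small wrinkle worth tightening in step~(3): your first pass claims that an $x_i$-divisible leading term of $f$ can always be reduced ``using a $G_j$ with $x_i\mid\LM_\sigma(G_j)$''. That particular $G_j$ need not exist; the $G_j$ whose leading term divides $\LM_\sigma(f)$ may well have $x_i\nmid\LM_\sigma(G_j)$, with the factor of $x_i$ coming from the cofactor. Your fallback in the ``main obstacle'' paragraph---decomposing $f$ into homogeneous components $f_d\in\theta(I)$ and applying the lemma to the top surviving $f_d$---is the clean fix and is exactly the move that makes the argument go through. With that adjustment your steps (1)--(5) give a complete direct proof of what the paper cites.
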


\proof  
First of all we observe that~$\theta$ is a graded 
isomorphism, so that $\theta(I)$ is a homogeneous ideal. 
Consequently the claim follows from a classical result in 
Gr\"obner basis theory (see for instance~\cite[Corollary 4.4.18]{KR2}).
\endproof

The following examples illustrate this proposition.

\begin{example}
Let $P=K[x,y,z,w]$, let $I = (z^2-xw,\ x^2y-zw^2)$,  let 
$\ell = 3y+w$, and let $L = z-\ell=z -3y-w$.
We consider the linear change of coordinates $\theta$ 
which sends $z$ to $z+\ell = z+3y+w$, 
and $x$ to $x$, $y$ to $y$, $w$ to $w$.
Then $\theta(I) = (  9y^2 -xw +6yw +w^2 +6yz +2zw+z^2,\ 
  x^2y -{3yw^2}^{\mathstrut} -w^3 -zw^2)$.
Let $\sigma$ be a term ordering of $z$-{\tt DegRev} type.
The reduced $\sigma$-Gr\"obner basis  of $\theta(I)$ is 
 $$
 \begin{array}{lll}
G=\!\!\! &\{\,y^2-\tfrac{1}{9}xw+\tfrac{2}{3}yw+\tfrac{1}{9}w^2+\tfrac{2}{3}yz
+\tfrac{2}{9}wz+\tfrac{1}{9_{\mathstrut}}z^2,\\
&\,\,x^2y-3yw^2-w^3-w^2z,\\
&\,\, x^3w-x^2w^2-3xw^3-9yw^3-3w^4-2x^2wz-9yw^2z
-6w^3z-x^2z^2-3w^2z^2\}
  \end{array}
$$
If we mod out $z$ we get 
 $$
 \begin{array}{lll}
\rho(G)=\!\!\! &\{\,y^2-\tfrac{1}{9}xw+\tfrac{2}{3}yw+\tfrac{1}{9}w^2,\\
&\,\,x^2y-{3yw^2}^{\mathstrut}-w^3,\\
&\,\,x^3w-x^2w^2-3xw^3-9yw^3-3w^4\}
 \end{array}
$$
On the other hand, if $\rho$ is defined by $\rho(x) = x$, $\rho(y) = y$,
$\rho(w) = w$, $\rho(z) = 0$, and we  put $\pi_L = \rho\circ \theta$
we have $\pi_L(I) = (9y^2 -xw +6yw +w^2,\ 
x^2y -{3yw^2}^{\mathstrut} -w^3)$. If we compute 
the $\hat{\sigma}$-reduced Gr\"obner basis 
of  $\pi_L(I)$ we get $\rho(G)$, as prescribed by the proposition.
\end{example}

\begin{example}\label{delete0}
Let $P=K[x_0,x_1,x_2,x_3]$, 
let $F_1 =  x_3^3-x_1x_2x_0$, $F_2=x_2^3-x_1x_3x_0-x_2x_0^2$, 
$F_3 = x_1^2x_2-x_3x_0^2$, 
let $I=(F_1,F_2,F_3)$, and let $\sigma$ be a term ordering
of $x_0$-{\tt DegRev} type.
The reduced Gr\"obner basis of $I$ is $(F_1, F_2, F_3, F_4)$
where $f_4 = x_1^3x_3x_0-x_2^2x_3x_0^2+x_3x_0^4$.
If we mod out $x_0$ we get $\pi_L(F_1) = x_3^3$,
$\pi_L(F_2) =x_2^3$, $\pi_L(F_3)=x_1^2x_2$ , 
$\pi_L(F_4)= 0$. The reduced Gr\"obner basis of $\pi_L(I)$ 
is $(\pi_L(F_1),\pi_L(F_2), \pi_L(F_3))$. We have to take 
out $\pi_L(F_4) = 0$.
\end{example}

\subsection{The non-Homogeneous Case}

The main feature of the homogeneous case is that 
if $\sigma$ is a term ordering of $x_i$-{\tt DegRev} type and~$F$ is a non zero 
homogeneous polynomial, then $x_i\, |\, F$ if and 
only if $x_i \,|\, \LT_\sigma(F)$. This fact implies that in 
Proposition~\ref{genhyper}
it suffices to remove $0$  from the set 
$\{\rho(G_1),\dots,\rho(G_s)\}$, 
and get the reduced Gr\"obner basis of $\pi_L(I)$
(see for instance Example~\ref{delete0}).
 Even if  $\sigma$ is a term ordering of $x_i$-{\tt DegRev} type,
in the non homogeneous case it may happen that 
$x_i$ divides $\LT_\sigma(f)$ but $x_i$ does not divide $f$.
For instance, if ${f=x^2-y}$ and $\sigma$ is any 
degre-compatible term ordering 
then~$x$ divides $x^2=\LT_\sigma(f)$, but it does not divide~$f$.  
These observations lead to a different approach of the 
hyperplane section problem in the non homogeneous case.
In particular, Assumptions~\ref{hyperplane} are modified as follows.

\begin{assumption}\label{stronghyperplane}
Let $P^{\mathstrut}=K[x_1,\dots,x_n]$, 
let $i \in \{1,\dots, n\}$, let $c_{i+1}, \dots, c_n, \gamma \in K$,
and let $L=x_i-\ell\ $ be the linear polynomial with
$\ell =  \sum_{j> i} c_jx_j+\gamma$ if $i<n$ , and  $\ell = \gamma$ if $i=n$.
We identify $P/(L)$ with the  ring ${\hat{P} = K[x_1, \dots, 
x_{i-1},x_{i+1},\dots,x_n]}$ via the isomorphism
defined by $\pi_L(x_i) = \ell$, $\pi_L(x_j) = x_j$
for $j\ne i$. 
\end{assumption}

\begin{theorem}\label{mainhyper}{\bf (Hyperplane Sections)}\\
Let~$\sigma$ be a term ordering on~$\mathbb T^n$ and,
under Assumptions~\ref{stronghyperplane}, let~$I$ be an ideal 
in the polynomial ring $P$,
let $G=\{g_1,\dots,g_s\}$ be a monic $\sigma$-Gr\"obner basis
of~$I$, and 
let~$\LT_\sigma(g_j) = \LT_{\hat{\sigma}}(\pi_L(g_j))$ for every $j=1,\dots, s$. 

\begin{enumerate}
\item[(a)] The linear polynomial $L$
does not divide zero modulo $I$.

\item[(b)] The set $\pi_L(G) = \{\pi_L(g_1),\dots,\pi_L(g_s) \}$ is 
a $\hat{\sigma}$-Gr\"obner basis of~${\pi_L(I)}_{\mathstrut}$.

\item[(c)] If $G$ is minimal, then also $\pi_L(G)$ is minimal.

\item[(d)] If $L=x_i-\gamma$ and $G$ is the reduced 
$\sigma$-Gr\"obner basis of $I$, then $\pi_L(G)$ 
is the reduced \hbox{$\hat{\sigma}$-Gr\"obner} basis of $\pi_L(I)$.

\end{enumerate}
\end{theorem}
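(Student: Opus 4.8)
\noindent
The plan is as follows. First I would record two elementary facts about the substitution $\pi_L$. Since $\ell=\sum_{j>i}c_jx_j+\gamma$ involves no $x_i$, we have $\LT_\sigma(\ell)<_\sigma x_i$ (or $\ell=0$), hence every term of $\ell^{\,k}$ is $<_\sigma x_i^{\,k}$ for $k\ge 1$; therefore $\pi_L$ sends a term $t$ to a polynomial all of whose terms are $\le_\sigma t$, with strict inequality unless $x_i\nmid t$. This yields: \emph{(A)} for every $f\in P$, each term of $\pi_L(f)$ is $\le_\sigma\LT_\sigma(f)$; and \emph{(B)} if $x_i\nmid\LT_\sigma(f)$ then $\LT_\sigma(f)$ is fixed by $\pi_L$ and, by (A), receives no contribution from any other term of $f$, so $\LT_{\hat\sigma}(\pi_L(f))=\LT_\sigma(f)$ with unchanged leading coefficient. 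In particular the hypothesis $\LT_\sigma(g_j)=\LT_{\hat\sigma}(\pi_L(g_j))$ is equivalent to $x_i\nmid\LT_\sigma(g_j)$ for all $j$; write $t_j:=\LT_\sigma(g_j)=\LT_{\hat\sigma}(\pi_L(g_j))$, so each $t_j$ is free of $x_i$ and each $\pi_L(g_j)$ is monic since $G$ is.

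For (a), note $\LT_\sigma(L)=x_i$ because $x_i>_\sigma x_j$ for $j>i$ and $x_i>_\sigma 1$. Given $fL\in I$, put $r:=\NF_\sigma(f,G)$; since $f-r\in I$ we get $rL\in I$. If $r\ne 0$ then $\LT_\sigma(rL)=\LT_\sigma(r)\,x_i$ lies in $\LT_\sigma(I)=(t_1,\dots,t_s)$, so some $t_j$ divides $\LT_\sigma(r)\,x_i$; as $x_i\nmid t_j$ this forces $t_j\mid\LT_\sigma(r)$, contradicting that a $\sigma$-normal form has no term divisible by any $t_j$. Hence $r=0$ and $f\in I$.

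For (b) I would work in $J:=I+(L)\subseteq P$. Under the identification $\hat P=P/(L)$ one has $\pi_L(I)=(I+(L))\cap\hat P=J\cap\hat P$ (a polynomial of $\hat P$ lies in $I+(L)$ exactly when it equals $\pi_L(f)$ for some $f\in I$). Let $\tau$ be the term ordering on $\mathbb T^n$ that compares terms first by the exponent of $x_i$ and breaks ties by $\hat\sigma$: it is an elimination ordering for $x_i$, its restriction to $\hat P$ is $\hat\sigma$, and $\LT_\tau(L)=x_i$. I would show $\{\pi_L(g_1),\dots,\pi_L(g_s),L\}$ is a $\tau$-Gr\"obner basis of $J$; then the Elimination Theorem (see~\cite{KR1}) gives that $\pi_L(G)=\{\pi_L(g_1),\dots,\pi_L(g_s)\}$ is a $\hat\sigma$-Gr\"obner basis of $J\cap\hat P=\pi_L(I)$. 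The set generates $J$ since $\pi_L(g_j)\equiv g_j\pmod{L}$, so by Buchberger's Criterion it suffices to treat the $S$-pairs. For a pair $(L,\pi_L(g_j))$ the leading terms $x_i$ and $t_j$ are coprime, so the $S$-polynomial reduces to zero. For a pair $(\pi_L(g_j),\pi_L(g_k))$: since $\pi_L$ fixes $\hat P$ and is a ring homomorphism and $\LT_\tau(\pi_L(g_j))=t_j$, one checks $S_\tau(\pi_L(g_j),\pi_L(g_k))=\pi_L(S_\sigma(g_j,g_k))$; taking a representation $S_\sigma(g_j,g_k)=\sum_l h_l g_l$ with every $\LT_\sigma(h_l g_l)<_\sigma\operatorname{lcm}(t_j,t_k)$ (which exists because $G$ is a $\sigma$-Gr\"obner basis) and applying $\pi_L$, fact (A) shows every term of $\pi_L(h_l g_l)$ is $\le_\sigma\LT_\sigma(h_l g_l)$; as these terms are free of $x_i$ — where $<_\tau$, $<_{\hat\sigma}$ and $<_\sigma$ agree — we get $\LT_\tau(\pi_L(h_l)\pi_L(g_l))<_\tau\operatorname{lcm}(t_j,t_k)$, so $\pi_L(S_\sigma(g_j,g_k))=\sum_l\pi_L(h_l)\pi_L(g_l)$ is the desired reduction to zero. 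This verification is the technical heart and the step I expect to be the main obstacle.

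Parts (c) and (d) are then short. By (B) the leading terms of $\pi_L(G)$ are $t_1,\dots,t_s$, the leading terms of $G$ itself (and $\pi_L(G)$ is monic); hence if these have no divisibility relations among them — minimality of $G$ — neither does $\pi_L(G)$, giving (c). For (d), when $L=x_i-\gamma$ the map $\pi_L$ substitutes the constant $\gamma$ for $x_i$, so each term of $\pi_L(g_j)$ equals $\overline{t}:=t/x_i^{\log_{x_i}(t)}$ for some term $t$ of $g_j$, and a term $\overline{t}\ne t_j$ arises only from a non-leading term $t$ of $g_j$. If $G$ is reduced, such a $t$ is divisible by no $t_k$; since each $t_k$ is free of $x_i$ and $\overline{t}\mid t$, also $t_k\nmid\overline{t}$. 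Thus no non-leading term of $\pi_L(g_j)$ is divisible by any $t_k$, and together with (b) and (c) this shows $\pi_L(G)$ is the reduced $\hat\sigma$-Gr\"obner basis of $\pi_L(I)$.
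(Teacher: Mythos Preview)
Your proof is correct. Parts (a), (c), and (d) are essentially the paper's arguments in slightly different dress: for (a) the paper phrases the same idea as a minimal-leading-term counterexample while you use the normal form, and for (d) both of you exploit that removing the $x_i$-part of a term $t$ yields a divisor $\overline t\mid t$, so $t_k\nmid t$ forces $t_k\nmid\overline t$ because $t_k$ is free of $x_i$.

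Part (b), however, takes a genuinely different route. The paper argues directly in $\hat P$: assuming a nonzero $f\in\pi_L(I)$ whose leading term is not hit, it picks a preimage $F\in I$ with $\pi_L(F)=f$ of minimal $\sigma$-leading term and derives a contradiction by a two-case analysis on whether $x_i\mid\LT_\sigma(F)$ (if so, subtract $L\cdot t\cdot g_j$ to lower the leading term; if not, $\LT_\sigma(F)=\LT_{\hat\sigma}(f)$ is already divisible by some $t_j$). You instead pass to $J=I+(L)$, introduce the block ordering $\tau$ eliminating $x_i$, verify Buchberger's criterion for $\{\pi_L(g_1),\dots,\pi_L(g_s),L\}$ with respect to $\tau$ (coprime pairs with $L$; for the remaining pairs you push a $\sigma$-standard representation of $S_\sigma(g_j,g_k)$ through $\pi_L$ using your fact (A)), and then invoke the Elimination Theorem. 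Both arguments hinge on the same elementary observation that $\pi_L$ can only lower leading terms, but they organize it differently: the paper's approach is self-contained and avoids any auxiliary ordering, while yours packages the work into standard machinery (Buchberger plus elimination) and makes the structural reason --- that $\pi_L(G)\cup\{L\}$ is already a Gr\"obner basis of $I+(L)$ for a suitable ordering --- explicit. Your approach generalizes more readily if one wanted to mod out several variables at once.
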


\proof To prove (a) we assume, for contradiction, that $L$
is a zero divisor modulo $I$. Let $F$ be a non-zero monic polynomial with 
minimal leading term such that $FL\in I$. 
The assumption that 
$\LT_\sigma(g_j) = \LT_{\hat{\sigma}}(\pi_L(g_j))$ for every $j=1,\dots, s$
implies that $x_i$ does not divide any $\LT_\sigma(g_i)$.
Therefore from $\LT_\sigma(L)=x_i$ we deduce that there exist
$\ell \in \{1, \dots, s\}$ and a monic polynomial $H\in P$ 
with $\LT_\sigma(F) = \LT_\sigma(H)\LT_\sigma(g_\ell)$. Then $(F-Hg_\ell)L \in I$
and $\LT_\sigma(F-Hg_\ell) <_\sigma \LT_\sigma(F)$, a contradiction.

Next we prove (b). 
 It is clear that $\pi_L(G)$ 
generates $\pi_L(I)$, hence we need to prove that for every 
non-zero element~$f$ of $\pi_L(I)$ its leading term
$\LT_{\hat{\sigma}}(f)$ is divided by the leading term of 
an element of~$\pi_L(G)$.
For contradiction, suppose that there exists a non-zero monic
element $f \in \pi_L(I)$ such 
that~$\LT_{\hat{\sigma}}(f)$ is not divided by any leading 
term of the elements of $\pi_L(G)$, and let  $F \in I$ 
be a non-zero monic polynomial with 
minimal leading term such that $\pi_L(F)= f$.
A priori there are two possibilities: either $x_i\mid\LT_\sigma(F)$
or $x_i\nmid \LT_\sigma(F)$. If $x_i\mid \LT_\sigma(F)$
there exist an index $j\in \{1,\dots, s\}$ and a term $t \in \mathbb T_i$ 
such that we have ${\LT_\sigma(F) = x_i\cdot t\cdot \LT_\sigma(g_j)}$.
We let $H =F- L \cdot t\cdot  g_j$ and observe that  $\pi_L(H) = \pi_L(F)= f$,
since $\pi_L(L) = 0$.
On the other hand, $\LT_\sigma(H)<_\sigma\LT_\sigma(F)$ which 
contradicts the minimality of $F$. So this case is excluded and 
hence $x_i\nmid \LT_\sigma(F)$.
Since $\pi_L$ substitutes $x_i$ 
with a linear polynomial whose support contains only 
terms which are $\sigma$-smaller than~$x_i$, we deduce 
that  $\LT_\sigma(F) =  \LT_{\hat{\sigma}}(f)$. 
Since there exists $j$ such that $\LT_\sigma(g_j) \mid \LT_\sigma(F)$, 
we deduce that $\LT_{\hat{\sigma}}(\pi_L(g_j)) \mid \LT_{\hat{\sigma}}(f)$,
a contradiction. 

Claim (c) follows from (b) and the fact that the leading terms of the elements of both bases are the same.

Finally we prove (d). Let $t$ be a power product in the 
support of $\pi_L(g_j)-\LT_{\hat{\sigma}}(\pi_L(g_j))$. 
If $t=\pi_L(t)$ with $t$ in the support of $g_g-\LT_\sigma(g_j)$,
then $t$ is not a multiple of any $\LT_\sigma(g_j)$, hence of 
any $\LT_\sigma(\pi_L(g_j))$. If $t = \frac{1}{\gamma^a}\pi_L(x_i^at)$
with $x_i^at$ in the support of $g_g-\LT_\sigma(g_j)$, then  $x_i^at$ is not a multiple of any $\LT_\sigma(g_j)$, 
hence $t$ is not a multiple of any $\LT_\sigma(g_j)$, and so
$t$ is not a multiple of  any $\LT_\sigma(\pi_L(g_j))$ as well.
\endproof

In the theorem, the assumption that 
$\LT_\sigma(g_j) = \LT_{\hat{\sigma}}(\pi_L(g_j))$ for every $j=1,\dots, s$
is essential, as the following example 
shows.

\begin{example}\label{essentialassumption}
Let $P=K[x_1,x_2,x_3,x_4]$, let $f_1=x_2x_3-x_4$, $f_2 = x_1^3-2x_3^2$, 
let $I =(f_1, f_2)$, and let~$\sigma$
be any degree-compatible term ordering. Then $(f_1, f_2)$ is the 
reduced $\sigma$-Gr\"obner basis of~$I$. If we 
substitute $x_1$ with $x_3+x_4$, and let $f_2'$ be the polynomial 
obtained from $f_2$ with this substitution, then 
the reduced ${\tt DegRevLex}$-Gr\"obner basis
is  $(f_1,\  f_2',\ f_3)$ 
which differs from $(f_1, f_2')$
since it includes the \textbf{new}
polynomial $f_3=x_2x_4^3 +x_3^2x_4 +3x_3x_4^2 +3x_4^3 -2x_3x_4$.
\end{example}

In particular, the fact that 
$\LT_\sigma(g_j) = \LT_{\hat{\sigma}}(\pi_L(g_j))$ for $j=1,\dots, s$ 
if and only if~$x_i$ does not divide any leading term of the 
elements of $G$, is essential in the proof that minimality of~$G$ implies 
minimality of $\pi_L(G)$. 
However,  for a general $L$ the conclusion of  statement (d) of the theorem 
does  not hold, as the following example shows. 

\begin{example}\label{non reduced}
The set  $G =\{ x_2^3-x_1^2, \ x_3^2-1\}$ is the reduced 
$\sigma$-Gr\"obner 
basis for every degree-compatible term ordering~$\sigma$ 
with $x_1>_\sigma x_2>_\sigma x_3$, but if $L=x_1-x_3$ then 
$\pi_L(G) = \{{x_2^3}^{\mathstrut}-x_3^2, \ x_3^2-1\}$ is not reduced.
\end{example}

\subsection{Lifting}

We are going to prove a sort
of converse of Theorem~\ref{mainhyper}.

\begin{theorem}\label{lifting}{\bf (Lifting Gr\"obner Bases)}\\
Let~$\sigma$ be a term ordering on~$\mathbb T^n$ and,
under Assumptions~\ref{stronghyperplane},  let $I$ be an ideal in $P$
such that~$L$ does not divide zero modulo $I$, 
let $G=\{g_1,\dots, g_s\} \subset I$ be such that
$\pi_L(G) = \{\pi_L(g_1), \dots, \pi_L(g_s)\}$ is
a $\hat{\sigma}$-Gr\"obner
basis of~$\pi_L(I)$, and 
$\LT_\sigma(g_j) = \LT_{\hat{\sigma}}(\pi_L(g_j))$
 for $j=1,\dots, s$.
 
 \begin{enumerate}
\item[(a)] 
The set $G$ is a $\sigma$-Gr\"obner basis of $I$.

\item[(b)] If $\pi_L(G)$ is minimal, then also $G$ is minimal.

\item[(c)] If $L = x_i-\gamma$ and $\pi_L(G)$ is the reduced
$\hat{\sigma}$-Gr\"obner basis of $\pi_L(I)$, then 
$G$ is the reduced  $\sigma$-Gr\"obner basis of $I$.

\end{enumerate}
\end{theorem}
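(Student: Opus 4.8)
The plan is to prove the three statements by "lifting" the key facts already established in the proof of Theorem~\ref{mainhyper}, exploiting the hypothesis that $L$ is a non-zero-divisor modulo $I$ in place of part (a) of that theorem. First I would prove (a) directly. We know $G \subseteq I$, so $(G) \subseteq I$; the nontrivial direction is $I \subseteq (G)$ together with the Gr\"obner property. I would argue as follows: take a non-zero $F \in I$ with, say, minimal leading term among elements of $I$ whose leading term is not divisible by any $\LT_\sigma(g_j)$, aiming for a contradiction. As in the proof of Theorem~\ref{mainhyper}, the hypothesis $\LT_\sigma(g_j) = \LT_{\hat\sigma}(\pi_L(g_j))$ forces $x_i \nmid \LT_\sigma(g_j)$ for all $j$, and $\pi_L$ replaces $x_i$ by a linear form whose support consists of terms $\sigma$-smaller than $x_i$. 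There are two cases. If $x_i \nmid \LT_\sigma(F)$, then $\LT_\sigma(F) = \LT_{\hat\sigma}(\pi_L(F))$, and $\pi_L(F) \in \pi_L(I)$ is divisible (as to its leading term) by some $\LT_{\hat\sigma}(\pi_L(g_j)) = \LT_\sigma(g_j)$ since $\pi_L(G)$ is a $\hat\sigma$-Gr\"obner basis of $\pi_L(I)$ --- but we must first check $\pi_L(F) \neq 0$, and this is exactly where the non-zero-divisor hypothesis enters: if $\pi_L(F) = 0$ then $F \in (L)$, so $F = L F'$ with $F' \in P$, and since $L$ is a non-zero-divisor mod $I$ and $LF' = F \in I$ we get $F' \in I$; moreover $\LT_\sigma(F') <_\sigma \LT_\sigma(F)$ because $\LT_\sigma(L) = x_i$ actually appears (it is not cancelled, as $x_i \nmid \LT_\sigma$ of the other factor's contribution — more carefully, $\LT_\sigma(F) = x_i \LT_\sigma(F')$ so $F'$ has strictly smaller leading term), contradicting minimality of $F$ unless $F' $ itself has leading term divisible by some $\LT_\sigma(g_j)$, in which case so does $F = LF'$, again a contradiction. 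If instead $x_i \mid \LT_\sigma(F)$, I would mimic the reduction step of Theorem~\ref{mainhyper}: write $\LT_\sigma(F) = x_i \cdot t \cdot \LT_\sigma(g_j)$ and replace $F$ by $F - L\, t\, g_j \in I$, which has strictly smaller leading term and the same image under $\pi_L$; iterating, we either reach $0$ or land in the previous case. This establishes that every leading term of $I$ is divisible by some $\LT_\sigma(g_j)$; in particular $\LT_\sigma(I) = (\LT_\sigma(g_1), \dots, \LT_\sigma(g_s))$, and the standard argument then gives $I = (G)$, so $G$ is a $\sigma$-Gr\"obner basis of $I$.

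For (b), the point is purely about leading terms: the hypothesis $\LT_\sigma(g_j) = \LT_{\hat\sigma}(\pi_L(g_j))$ means the leading term multiset of $G$ equals that of $\pi_L(G)$, so $G$ is minimal as a $\sigma$-Gr\"obner basis (no $\LT_\sigma(g_j)$ divides another) exactly when $\pi_L(G)$ is minimal as a $\hat\sigma$-Gr\"obner basis. Combined with (a), $G$ is then a minimal $\sigma$-Gr\"obner basis of $I$.

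For (c), assume $L = x_i - \gamma$ and $\pi_L(G)$ is the reduced $\hat\sigma$-Gr\"obner basis of $\pi_L(I)$. By (a) and (b), $G$ is a minimal $\sigma$-Gr\"obner basis of $I$ and each $g_j$ is monic (its leading coefficient equals that of $\pi_L(g_j)$, which is $1$). It remains to check reducedness, i.e.\ that no term in the support of $g_j - \LT_\sigma(g_j)$ is divisible by any $\LT_\sigma(g_k)$. I would argue contrapositively, transporting a bad term down via $\pi_L$: if a term $x_i^a u$ (with $u$ a term in $x_1,\dots,x_{i-1},x_{i+1},\dots,x_n$) lies in the support of $g_j - \LT_\sigma(g_j)$ and is divisible by some $\LT_\sigma(g_k)$ --- note $x_i \nmid \LT_\sigma(g_k)$ so $\LT_\sigma(g_k) \mid u$ --- then under $\pi_L$, substituting $x_i \mapsto \gamma$ turns $x_i^a u$ into $\gamma^a u$, contributing the term $u$ to $\pi_L(g_j)$ (possibly combined with contributions from other support terms of $g_j$, but since $\sigma$ is degree-... no: here the clean point is that $\pi_L$ with $L = x_i-\gamma$ sends each term $x_i^a u$ to a scalar times $u$, and distinct support terms of $g_j$ of the form $x_i^b u$ for the same $u$ can only collapse together, not cancel, if their coefficients are chosen adversarially). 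This is the delicate bookkeeping step: one must verify that the term $u$ genuinely survives in $\pi_L(g_j)$ with nonzero coefficient, or else track which support term it comes from. Provided that bookkeeping goes through --- and it does because, running the argument of Theorem~\ref{mainhyper}(d) in reverse, a term $u$ in the support of $\pi_L(g_j) - \LT_{\hat\sigma}(\pi_L(g_j))$ divisible by some $\LT_\sigma(g_k)$ would contradict reducedness of $\pi_L(G)$ --- we conclude $G$ is reduced.

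The main obstacle I anticipate is the non-zero-divisor bookkeeping in part (a): when $\pi_L(F) = 0$ one must factor out $L$ and argue that the quotient still lies in $I$ (this is precisely the non-zero-divisor hypothesis) and has strictly smaller leading term whose own leading term is still not divisible by any $\LT_\sigma(g_j)$, so that the minimal counterexample machinery closes up. Getting the leading-term comparison $\LT_\sigma(LF') = x_i \LT_\sigma(F')$ exactly right (using that $x_i = \LT_\sigma(L)$ and that no cancellation against lower terms of $L$ can raise the leading term) is the only place real care is needed; everything else is a faithful mirror of the proof of Theorem~\ref{mainhyper}.
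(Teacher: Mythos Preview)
Your argument for (a) has a real gap in the case $x_i \mid \LT_\sigma(F)$. You propose to ``write $\LT_\sigma(F) = x_i \cdot t \cdot \LT_\sigma(g_j)$'' and reduce by $L\,t\,g_j$, mimicking the proof of Theorem~\ref{mainhyper}. But in that theorem the existence of such a $g_j$ came from the standing hypothesis that $G$ is already a $\sigma$-Gr\"obner basis of $I$, so that some $\LT_\sigma(g_j)$ divides $\LT_\sigma(F)$. Here this is precisely what you are trying to prove, and you have just assumed for contradiction that \emph{no} $\LT_\sigma(g_j)$ divides $\LT_\sigma(F)$. So there is no $g_j$ available to reduce by, and the iteration cannot start. (Incidentally, in your first case $x_i\nmid\LT_\sigma(F)$ the sub-case $\pi_L(F)=0$ cannot actually occur: the leading term of $F$ involves no $x_i$ and hence survives under $\pi_L$, so $\pi_L(F)\ne 0$ automatically; the non-zero-divisor hypothesis is not what is doing the work there.)

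The paper avoids this case split with a different idea. Setting $f=\pi_L(F)$ and disposing of $f=0$ via the non-zero-divisor hypothesis as you do, it then chooses a $\hat{\sigma}$-Gr\"obner representation $f=\sum_j h_j\,\pi_L(g_j)$ with $h_j\in\hat{P}$ and $\LT_{\hat{\sigma}}(f)=\max_j\LT_{\hat{\sigma}}(h_j\pi_L(g_j))$, and lifts it to $U=F-\sum_j h_j g_j\in I$, which satisfies $\pi_L(U)=0$. If $U=0$ one reads off $\LT_\sigma(g_\ell)\mid\LT_\sigma(F)$ directly from the representation; if $U\ne 0$ one writes $U=HL$ with $H\in I$ (non-zero-divisor), checks that $\LT_\sigma(U)=\LT_\sigma(F)$ because each $\LT_\sigma(h_jg_j)=\LT_{\hat{\sigma}}(h_j\pi_L(g_j))\le\LT_{\hat{\sigma}}(f)\le\LT_\sigma(F)$, and then $\LT_\sigma(H)\mid\LT_\sigma(F)$ contradicts the minimality of $F$. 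The essential move you are missing is this lifting of a Gr\"obner representation from $\hat{P}$ back to $P$; without it the case $x_i\mid\LT_\sigma(F)$ does not close. Your treatment of (b) matches the paper's. For (c), your caution about the ``delicate bookkeeping'' (several terms $x_i^a u$ for the same $u$ collapsing under $x_i\mapsto\gamma$, possibly with cancellation) is well-founded; the paper's own proof of (c) passes over this point with the same brevity.
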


\proof We prove (a) by contradiction. Suppose that there 
exists a monic non-zero polynomial $F \in I$ with minimal leading 
term among the 
polynomials in $I$ such that $\LT_\sigma(F)$ is not divisible 
by any leading term of the elements of $G$, and
let $f = \pi_L(F)$. If $f = 0$ then there exists $H \in P$ with 
$F = HL$, and the assumption about $L$ implies 
that $H \in I$. Moreover $\LT_\sigma(H)\mid \LT_\sigma(F)$, 
a contradiction. 
Therefore $f=0$ is excluded, hence there exist suitable
polynomials $h_j\in \hat{P}$ such that 
$f$ can be written as $f= \sum_{j=1}^sh_j\pi_L(g_j)$
with $\LT_{\hat{\sigma}}(f)= 
\max_{j=1}^s\{\LT_{\hat{\sigma}}(h_j\pi_L(g_j))\}$. 
If we let  $U = F-\sum_{j=1}^sh_jg_j$, we get $\pi_L(U) = 
f-\sum_{j=1}^sh_j\pi_L(g_j) = 0$. Consequently
there exists $H \in P$ with 
$U = HL$ and the assumption about $L$ implies 
that $H \in I$. We examine the two possible cases.

{\it Case 1}: $H = 0$. In this case $F = \sum_{j=1}^sh_jg_j$. 
We know that  $\LT_{\hat{\sigma}}(f)
= \max_{j=1}^s\{\LT_{\hat{\sigma}}(h_j\pi_L(g_j)\}$, hence
there exists at least one index $\ell$ such that 
$\LT_{\hat{\sigma}}(f) = \LT_{\hat{\sigma}}(h_\ell\pi_L(g_\ell))$. 
On the other hand we have $\LT_{\hat{\sigma}}(h_\ell \pi_L(g_\ell)) 
= \LT_\sigma(h_\ell g_\ell)$, hence $\LT_\sigma(F) 
=  \LT_\sigma(h_\ell g_\ell)$, so that $\LT_\sigma(g_\ell)\mid \LT_\sigma(F)$,
a contradiction.

{\it Case 2}: $H \ne 0$.  Since $\pi_L$ 
can only lower the leading term of a polynomial, 
we have the equality $ \LT_\sigma(F)=\LT_\sigma(U)$, hence 
$\LT_\sigma(F)= \LT_\sigma(HL)$. But then 
$\LT_\sigma(H) \mid \LT_\sigma(F)$, and $H \in I$, 
a contradiction.

Claim (b) follows from (a) and the fact that the leading terms of the elements of both bases are the same.

To prove (c) we let $t$ be  a power product in the 
support of $g_j-\LT_\sigma(g_j)$. We have $t = x_i^at'$ 
with $x_i\nmid t'$.
Then $\pi_L(t) = c_o^at'$. We know that $t'$ 
is not divided by any leading term
of the $\pi_L(g_j)$, hence also $t$ is not divided by 
any leading term of the $g_j$.
\endproof

The following examples show the tightness of the assumptions 
in the above theorem.

\begin{example}\label{nonzerodivnecess}
Let $P=K[x_1,x_2,x_3,x_4]$ and $L= x_2-x_4$.
Then let $\sigma$ be any degree-compatible term ordering,
let $G= \{x_1^2,\ x_1x_3-x_2, \ x_1x_4, \ x_4^2\}$, let $I$ be 
the ideal generated by~$G$, and let
${\pi_L(G)= \{x_1^2 ,\ x_1x_3-x_4, \ x_1x_4, \ x_4^2  \}}$.
We have  
$${x_1^2x_3-x_1(x_1x_3-x_2)-x_1x_4 = x_1(x_2-x_4) = x_1L \in I}$$
which implies that $L$ divides zero modulo $I$, so that all the 
hypotheses are satisfied except one.
And we see that $\pi_L(G)$ is the reduced $\hat{\sigma}$-Gr\"obner 
basis of $I_L$, while $G$ is not a $\sigma$-Gr\"obner basis of $I$, since
the reduced \hbox{$\sigma$-Gr\"obner} basis of $I$ 
is $ \{x_1^2,\ x_1x_3-x_2, \ x_1x_4, \ x_4^2, \  \mathbf{x_1x_2,\  x_2^2}\}$.
\end{example}

\begin{example}
Let $P=K[x_1,x_2,x_3,x_4]$ and $L= x_2-x_4$,
let $\sigma$ be any degree-compatible term ordering on $\mathbb T^n$,
let $G= \{x_2^2-x_3^2, \ x_1x_2\}$, let $I$ be 
the ideal generated by~$G$, and finally  let
$\pi_L(G)= \{-x_3^2+x_4^2,\  x_1x_4 \}$.
We observe that all the hypotheses are satisfied, 
except the fact that $\LT_\sigma(g_i) = \LT_{\hat{\sigma}}(\hat{g}_i)$
 for $i =1,\dots, s$.
And we see that 
$\pi_L(G)$ is the reduced $\hat{\sigma}$-Gr\"obner basis of $I_L$,
while~$G$ is not a $\sigma$-Gr\"obner basis of $I$, since
the reduced $\sigma$-Gr\"obner basis of $I$ is
$ \{x_2^2-x_3^2, \ x_1x_2, \ \mathbf{x_2x_3^2,\  x_3^4} \}$.
\end{example}

\begin{example}\label{notreduced}
Let $P=K[x_1,x_2,x_3,x_4]$ and let $\ell = x_2$, $L=x_1-\ell = x_1-x_2$.
Let $\sigma$ be any degree-compatible term ordering, and  let 
$G=\{x_2^3+x_1x_3-x_2x_3, \ x_3\}$. Then $\pi_L(G) = \{x_2^3, x_3\}$ is the reduced $\hat{\sigma}$-Gr\"obner basis, while $G$ is not reduced.
\end{example}

\subsection{Common Lifting}
In the following we consider  the lifting of Gr\"obner bases as 
described in Theorem~\ref{lifting} and
start  investigating how to make it explicit.

\begin{assumption}\label{multilift}
Let  $P=K[x_1,\dots,x_n]$,  $i \in \{1,\dots, n\}$, 
$c_{i+1}, \dots, c_n \in K$, $N \in \N$. Moreover
let $\gamma_1, \dots, \gamma_N$ be distinct elements of $K$, 
and  for $k =1,\dots, N$ let $L_k = x_i-\ell_k$ be linear polynomials
with $\ell_k = \sum_{j> i} c_jx_j+\gamma_k$ if $i<n$ , 
and  $\ell = \gamma_k$ if $i=n$.
We identify $P/(L_k)$ with the polynomial ring $\hat{P} = K[x_1, \dots, 
x_{i-1},x_{i+1},\dots,n]$ via the isomorphism induced by 
$\pi_{L_k}(x_i) = \ell_k$ and $\pi_{L_k}(x_j) = x_j$ for~$j\ne i$.
\end{assumption}

\begin{definition}\label{common}
Under Assumptions~\ref{multilift},
let  $\hat{g}_1, \dots, \hat{g}_N \in \hat{P}$ and let $g \in P$ be 
a polynomial such that $\pi_{L_k}(g) = \hat{g}_k$ for $k = 1,\dots, N$.
Then $g$ is called  a \textbf{common lifting} of $\hat{g}_1, \dots, \hat{g}_N$.
\end{definition}

\begin{remark}\label{commonlowdeg}
The linear polynomials are pairwise coprime. 
Therefore the  Chinese Remainder 
Theorem (see~\cite[Lemma 3.7.4]{KR2})
implies that all the common liftings of the $g_k$ differ by a multiple 
of $\prod_{k=1}^N L_k$. Consequently there exists at most one common lifting of degree less than $N$. However, even if the degrees of the 
$\hat{g}_i$ are less than $N$, a common lifting of degree less than $N$ may not exist, as the following example shows.
\end{remark}

\begin{example}
Let $P = K[x,y]$, $L_0 = y$, $L_1=y-1$, $L_2 = y-2$.
Then let $\hat{g}_1 = x$,  $\hat{g}_2 = x+1$,  $\hat{g}_3= x+4$, and
observe that their degree is less than $N=2$.
We compute a common lifting  and get $g = y^2+x$,  as the only 
one of degree less than 3. Therefore there is no common lifting of degree 
less than $2$.
\end{example}

\begin{theorem}\label{commonliftingGroebner}
{\bf (Common Lifting of Gr\"obner Bases)}\\
Under Assumptions~\ref{multilift}, let~$\sigma$ be a term 
ordering on~$\mathbb T^n$ and let $I$ be an ideal in $P$.
\begin{enumerate}
\item[(a)] If $\gamma_1, \dots, \gamma_N$  are sufficiently generic, 
all the minimal $\hat{\sigma}$-Gr\"obner bases of the ideals $\pi_{L_k}(I)$ 
share the same number of elements and the same leading terms, 
say $t_1, \dots, t_s$.

\item[(b)] If $N \gg 0$,  at least one of the $L_k$ does 
not divide zero modulo $I$.

\item[(c)] Let $g_1, \dots, g_s$ be  common 
liftings of the corresponding elements in 
minimal $\hat{\sigma}$-Gr\"obner
bases of the ideals $\pi_{L_k}(I)$. 
If~$g_i\!\in\! I$ and $\, \LT_\sigma(g_i) = t_i\,$ 
for $i=1,\dots, s$ then $\{g_1, \dots, g_s\}$
is a minimal $\sigma$-Gr\"obner basis of $I$.

\item[(d)] Let $L=x_i-\gamma$ and let $g_1, \dots, g_s$ be  common 
liftings of the corresponding elements in the
reduced $\hat{\sigma}$-Gr\"obner
bases of the ideals $\pi_{L_k}(I)$. 
If~$g_i\!\in\! I$ and $\, \LT_\sigma(g_i) = t_i\,$ 
for $i=1,\dots, s$ then $\{g_1, \dots, g_s\}$
is the reduced $\sigma$-Gr\"obner basis of $I$.
\end{enumerate}
\end{theorem}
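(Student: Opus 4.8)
The plan is to establish (a) by a standard Gr\"obner-basis specialization argument, (b) by counting associated primes, and to deduce (c) and (d) from Theorem~\ref{lifting} after selecting a suitable index~$k$; I read (c) and (d) under the standing hypotheses of (a) and (b), so that $t_1,\dots,t_s$ are well defined and, by (b), at least one $L_k$ does not divide zero modulo $I$. For~(a), adjoin a transcendental~$t$ and set $L_t=x_i-\sum_{j>i}c_jx_j-t$ over $K(t)$, so that $\pi_{L_t}$ identifies $K(t)[x_1,\dots,x_n]/(L_t)$ with $\hat P\otimes_K K(t)$. Let $\{\hat h_1,\dots,\hat h_s\}$ be the reduced $\hat\sigma$-Gr\"obner basis of $\pi_{L_t}\bigl(I\otimes_K K(t)\bigr)$, with leading terms $t_1,\dots,t_s$. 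Its coefficients lie in $K(t)$ and hence have finitely many poles, and the finitely many reductions that certify this is a Gr\"obner basis of $\pi_{L_t}(I\otimes_K K(t))$ involve only finitely many further denominators; collecting all of them into a single $f(t)\in K[t]$, every $\gamma\in K$ with $f(\gamma)\ne0$ gives a specialization $t\mapsto\gamma$ carrying $\{\hat h_1,\dots,\hat h_s\}$ to a $\hat\sigma$-Gr\"obner basis of $\pi_{L_\gamma}(I)$ with the same leading terms $t_1,\dots,t_s$. Thus ``sufficiently generic'' means that $\gamma_1,\dots,\gamma_N$ are distinct elements of the nonempty Zariski-open set cut out by $f\ne0$ (which requires $K$ to be large enough), and since the leading terms of \emph{any} minimal Gr\"obner basis of an ideal form the unique minimal monomial generating set of its leading-term ideal, every minimal $\hat\sigma$-Gr\"obner basis of every $\pi_{L_k}(I)$ has exactly $s$ elements, with leading terms $t_1,\dots,t_s$.

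For (b) we may assume $I\ne P$. A linear polynomial divides zero modulo $I$ precisely when it lies in one of the finitely many associated primes $\mathfrak p_1,\dots,\mathfrak p_m$ of $P/I$. If $L_k$ and $L_{k'}$ with $k\ne k'$ both lay in some $\mathfrak p_r$, then $L_k-L_{k'}=\gamma_{k'}-\gamma_k$ would be a nonzero scalar in $\mathfrak p_r$, forcing $\mathfrak p_r=P$; hence each $\mathfrak p_r$ contains at most one of the $L_k$, so at most $m$ of them divide zero modulo $I$. Any $N>m$ therefore works, so ``$N\gg0$'' means that $N$ exceeds the number of associated primes of $P/I$.

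For (c) and (d), use (b) to choose an index $k_0$ with $L_{k_0}$ not dividing zero modulo $I$. By the definition of common lifting, $\pi_{L_{k_0}}(g_j)$ is the element with leading term $t_j$ of the chosen minimal (resp.\ reduced) $\hat\sigma$-Gr\"obner basis of $\pi_{L_{k_0}}(I)$, so $\pi_{L_{k_0}}(G)$ coincides with that basis and $\LT_{\hat\sigma}(\pi_{L_{k_0}}(g_j))=t_j=\LT_\sigma(g_j)$ for all $j$. Hence $I$, $L_{k_0}$, and $G=\{g_1,\dots,g_s\}$ satisfy all the hypotheses of Theorem~\ref{lifting}: part~(a) of that theorem shows that $G$ is a $\sigma$-Gr\"obner basis of~$I$, and part~(b) that it is minimal (since $\pi_{L_{k_0}}(G)$ is), which proves (c). In the setting of (d) one moreover has $L_{k_0}=x_i-\gamma_{k_0}$ and $\pi_{L_{k_0}}(G)$ reduced, so part~(c) of Theorem~\ref{lifting} yields that $G$ is the reduced $\sigma$-Gr\"obner basis of~$I$.

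The real obstacle is (a): turning ``sufficiently generic'' into a precise statement rests on the standard but slightly technical fact that a Gr\"obner basis computed over the rational function field $K(t)$ specializes to a Gr\"obner basis of the fibre outside a proper closed subset of the parameter line --- the mechanism behind comprehensive/parametric Gr\"obner bases and the Gr\"obner fan (cf.~\cite{KR2} and the treatment of $\sigma$-schemes later in the paper). Once (a) and (b) are available, parts (c) and (d) are just bookkeeping on top of Theorem~\ref{lifting}.
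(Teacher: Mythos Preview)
Your proposal is correct and follows essentially the same route as the paper: for (a) you pass to a generic parameter over $K(t)$ and specialize (the paper does the same with a free parameter $a$, citing~\cite[Proposition~2.3]{BR}); for (b) you use associated primes where the paper uses primary components, but the core observation---two distinct $L_k$ differ by a nonzero scalar and hence cannot lie in the same proper prime---is identical; and for (c),(d) you invoke Theorem~\ref{lifting} after selecting via (b) an $L_{k_0}$ that is a non-zerodivisor, exactly as the paper does. Your write-up is in fact more explicit than the paper's in two useful places: you spell out why \emph{all} minimal Gr\"obner bases of $\pi_{L_k}(I)$ share the same leading terms (via uniqueness of minimal monomial generators of $\LT_{\hat\sigma}(\pi_{L_k}(I))$), and you note the implicit requirement that $K$ be large enough for the generic open set to contain $N$ distinct points.
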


\proof To prove claim (a), we let $a$ be a free parameter, 
let $L_a=x_i-( \sum_{j> i} c_jx_j +a)$, 
and let~$I_a$ be the ideal $I + (L_a)$ in the polynomial ring $K(a)[x_1, \dots, x_n]$. 
The $\sigma$-reduced Gr\"obner basis  of $I_a$ consists of $L_a$ 
and  polynomials in $K(a)[x_1, \dots,x_{i-1},x_{i+1},\dots , x_n]$. 
It evaluates to the reduced Gr\"obner basis 
of the corresponding ideal for almost all values of $a$ which implies 
that  the ideals~$\pi_{L_k}(I)$ share the same  leading term ideals 
(see~\cite[Proposition 2.3]{BR} for a more general argument). 

Claim (b) follows from the fact that each primary component of $I$ can 
contain at most one of the linear polynomials $L_k$, 
since any pair of them generate the unit ideal.

Claim (c) follows from (b) and Theorem~\ref{lifting}.b. 

Claim (d) follows from (b) and Theorem~\ref{lifting}.c. 
\endproof

Here we show an interesting example.

\begin{example}\label{zitrus}{\bf (Zitrus)}\\
There is a well-known example of a surface which represents a lemon
(see for instance the web page\ 
{\tt http://imaginary.org/gallery/herwig-hauser-classic}).
Its equation is the following ${F:=x^2+z^2 - y^3(1-y)^3=0}$. We cut it with 
a sufficiently high number of \textit{parallel hyperplanes}. In our case we choose
$z-\gamma =0$ for $\gamma \in \{-5,-4,-3,-2, 2,3,4,5\}$ and get the 
hyperplane sections defined by the following eight ideals:
$(y+5, \ x^2 +z^2 +27000)$, 
$(y+4, \ x^2 +z^2 +8000)$, 
$(y+3, \ x^2 +z^2 +1728)$, 
$(y+2, \ x^2 +z^2 +216)$, 
$(y-2, \ x^2 +z^2 +8)$, 
$(y-3, \ x^2 +z^2 +216)$, 
$(y-4, \ x^2 +z^2 +1728)$, 
$(y-5, \ x^2 +z^2 +8000)$. We use a \cocoa\ (see~\cite{Co}) script to compute the reconstruction
according to Theorem~\ref{commonliftingGroebner}, and indeed we get $F$ back. As a matter of curiosity, 
we observe that \textit{the real lemon} is reconstructed using eight slices with \textit{no real points}.
\end{example}

\begin{remark}
In general, if we want to use Theorem~\ref{commonliftingGroebner} to compute 
a Gr\"obner basis of~$I$,
we need to verify that the polynomials $g_k$ have the correct leading term,
and this is easy to do.  We also need to verify that they are in~$I$, and
in general this is a limit to the usefulness of the theorem.
Nevertheless there are nice situations where this verification 
can be done easily.
For instance, if we have a Gr\"obner basis $G$ of~$I$ and want to
compute the reduced Gr\"obner basis of~$I$ with respect to another
term ordering, then checking that the $g_k$ belong to~$I$ entails the
simple verification that their normal forms with respect to~$G$ are zero.
A favourable situation is the following.
If the ideal~$I$ is known via a parametrization,
then checking that the $g_k$ belong to $I$ requires only evaluating them at the parametric
expressions of the coordinates.
Let us show an example where the expected output
is a single polynomial.  
\end{remark}

\begin{example}\label{sectparams}{\bf (Rational Surface)}\\
Let $S$ be the affine surface in $\mathbb{A}^3_\Q$ given parametrically by 
{\small
$$
\begin{cases}
x\ =&s^5-st^3-t \\
y\ =&st^2-s \\
z\ =&s^4-t^2
\end{cases}
$$
}
The implicit equation of $S$ can be computed using a standard elimination procedure. 
We do it in~\cocoa\ and get the implicit equation $F=0$ where $F$ 
is the polynomial displayed below. It has degree~$14$ and its  support contains 319 power products.
Using a procedure suggested by the theorem and the remark, 
we can slice the surface with several hyperplanes 
parallel to~${z=0}$, compute the cartesian equation of the corresponding 
curve viewed as a curve in the affine plane, and then reconstruct the 
equation of the surface.
This procedure computes the polynomial $F$ using 
approximately $\frac{1}{120}$ of the time 
used by the standard procedure, and deserves further investigation.

\smallskip
{ \scriptsize
$F=
y^{14} +10y^{13} -12x^2y^8z^3 +34xy^9z^3 -10y^{10}z^3 +8xy^5z^7 -11y^6z^7 -y^2z^{11} +45y^{12} +8x^5y^5z^2 -65x^4y^6z^2
 +132x^3y^7z^2 -32x^2y^8z^2 -34xy^9z^2 -3y^10z^2 -72x^2y^7z^3 +204xy^8z^3 -44y^9z^3 +2x^4y^2z^6 -2x^3y^3z^6 +45x^2y^4z^6 -78xy^5z^6 +57y^6z^6 +32xy^4z^7 -44y^5z^7 -x^2z^10 -8y^2z^10 -2yz^11 +120y^11 -x^8y^2z +18x^7y^3z -86x^6y^4z +120x^5y^5z +38x^4y^6z -92x^3y^7z -28x^2y^8z +4xy^9z +32x^5y^4z^2 -260x^4y^5z^2 +464x^3y^6z^2 +112x^2y^7z^2 -236xy^8z^2 -48y^9z^2 -180x^2y^6z^3 +510xy^7z^3 -50y^8z^3 +2x^6z^5 -10x^5yz^5 +62x^4y^2z^5 -44x^3y^3z^5 +31x^2y^4z^5 +54xy^5z^5 +4y^6z^5 +4x^4yz^6 -4x^3y^2z^6 +138x^2y^3z^6 -196xy^4z^6 +164y^5z^6 +48xy^3z^7 -66y^4z^7 -10x^2z^9 -24y^2z^9 -18yz^10 -z^11 -x^10 +10x^9y -31x^8y^2 +18x^7y^3 +49x^6y^4 -26x^5y^5 -42x^4y^6 -4x^3y^7 +7x^2y^8 +2xy^9 +210y^10 -2x^8yz +36x^7y^2z -156x^6y^3z +56x^5y^4z +544x^4y^5z -296x^3y^6z -312x^2y^7z -24xy^8z -4y^9z +48x^5y^3z^2 -390x^4y^4z^2 +520x^3y^5z^2 +810x^2y^6z^2 -490xy^7z^2 -233y^8z^2 -240x^2y^5z^3 +680xy^6z^3 +64y^7z^3 +10x^6z^4 -2x^5yz^4 +127x^4y^2z^4 -78x^3y^3z^4 +163x^2y^4z^4 -12xy^5z^4 +5y^6z^4 +64x^4yz^5 +24x^3y^2z^5 +164x^2y^3z^5 +88xy^4z^5 +116y^5z^5 +2x^4z^6 -2x^3yz^6 +153x^2y^2z^6 -140xy^3z^6 +183y^4z^6 +32xy^2z^7 -44y^3z^7 -41x^2z^8 -32y^2z^8 -64yz^9 -10z^10 +18x^8y -124x^7y^2 +204x^6y^3 +104x^5y^4 -236x^4y^5 -120x^3y^6 +24x^2y^7 +16xy^8 +252y^9 -x^8z +18x^7yz -50x^6y^2z -286x^5y^3z +950x^4y^4z +216x^3y^5z -908x^2y^6z -280xy^7z -44y^8z +32x^5y^2z^2 -260x^4y^3z^2 +80x^3y^4z^2 +1480x^2y^5z^2 -232xy^6z^2 -518y^7z^2 +32x^6z^3 +40x^5yz^3 +128x^4y^2z^3 +66x^3y^3z^3 -134x^2y^4z^3 +518xy^5z^3 +231y^6z^3 +32x^5z^4 +174x^4yz^4 +164x^3y^2z^4 +364x^2y^3z^4 +164xy^4z^4 +46y^5z^4 +22x^4z^5 +80x^3yz^5 +327x^2y^2z^5 +178xy^3z^5 +267y^4z^5 +72x^2yz^6 -4xy^2z^6 +126y^3z^6 -88x^2z^7 +8xyz^7 -27y^2z^7 -112yz^8 -41z^9 +4x^8 -22x^7y -95x^6y^2 +496x^5y^3 -185x^4y^4 -484x^3y^5 -58x^2y^6 +40xy^7 +209y^8 +24x^6yz -260x^5y^2z +380x^4y^3z +1120x^3y^4z -816x^2y^5z -776xy^6z -180y^7z +56x^6z^2 +88x^5yz^2 +59x^4y^2z^2 -150x^3y^3z^2 +1195x^2y^4z^2 +450xy^5z^2 -598y^6z^2 +128x^5z^3 +296x^4yz^3 +424x^3y^2z^3 +208x^2y^3z^3 +308xy^4z^3 +280y^5z^3 +92x^4z^4 +362x^3yz^4 +641x^2y^2z^4 +514xy^3z^4 +202y^4z^4 +32x^3z^5 +246x^2yz^5 +308xy^2z^5 +300y^3z^5 -92x^2z^6 +18xyz^6 +76y^2z^6 -96yz^7 -88z^8 -46x^6y +128x^5y^2 +340x^4y^3 -560x^3y^4 -316x^2y^5 +16xy^6 +112y^7 +52x^6z +42x^5yz -38x^4y^2z +926x^3y^3z +97x^2y^4z -884xy^5z -362y^6z +224x^5z^2 +368x^4yz^2 +248x^3y^2z^2 +546x^2y^3z^2 +692xy^4z^2 -358y^5z^2 +248x^4z^3 +680x^3yz^3 +736x^2y^2z^3 +456xy^3z^3 +257y^4z^3 +160x^3z^4 +602x^2yz^4 +776xy^2z^4 +368y^3z^4 -2x^2z^5 +194xyz^5 +233y^2z^5 -4yz^6 -104z^7 +10x^6 +46x^5y +179x^4y^2 -82x^3y^3 -369x^2y^4 -64xy^5 +21y^6 +192x^5z +220x^4yz +388x^3y^2z +464x^2y^3z -432xy^4z -396y^5z +384x^4z^2 +624x^3yz^2 +502x^2y^2z^2 +486xy^3z^2 -78y^4z^2 +352x^3z^3 +872x^2yz^3 +792xy^2z^3 +322y^3z^3 +186x^2z^4 +536xyz^4 +384y^2z^4 +32xz^5 +98yz^5 -62z^6 +64x^5 +134x^4y +28x^3y^2 -112x^2y^3 -64xy^4 -22y^5 +298x^4z +358x^3yz +302x^2y^2z -82xy^3z -257y^4z +416x^3z^2 +640x^2yz^2 +448xy^2z^2 +82y^3z^2 +368x^2z^3 +688xyz^3 +376y^2z^3 +128xz^4 +200yz^4 -6z^5 +100x^4 +94x^3y -21x^2y^2 -15y^4 +256x^3z +248x^2yz +28xy^2z -112y^3z +344x^2z^2 +440xyz^2 +167y^2z^2 +224xz^3 +240yz^3 +32z^4 +64x^3 +22x^2y +148x^2z +94xyz -21y^2z +192xz^2 +144yz^2 +56z^3 +15x^2 +64xz +22yz +48z^2 +15z
$
}
\end{example}

\bigskip\goodbreak

\section{Families of Schemes and the Hough Transform}
\label{Families of Schemes}

In this section we consider families of algebraic schemes and recall
the necessary tools to introduce the notion of Hough transform.

\subsection{Families of Schemes}
As said in the introduction, the notation is borrowed
from~\cite{KR1} and~\cite{KR2}.
We start the section by recalling some definitions taken 
from~\cite{BR}.
We let~$x_1, \dots, x_n$ be
indeterminates, and most of the time in the following
we use the notation $\x = (x_1, \dots, x_n)$.
If~$K$ is a field, the multivariate
polynomial ring~$P=K[x_1,\dots,x_n]$ is
denoted by~$K[\x]$. If $f_1(\x), \dots, f_k(\x)$
are polynomials in~$P$,
the set~$\{f_1(\x), \dots, f_k(\x)\}$ is denoted by~$\f(\x)$, and
$\f(\x)=0$ is called a system of polynomial equations.

Moreover, we let $\aaa = (a_1, \dots, a_m)$
be an $m$-tuple of indeterminates which will play the role of
parameters. If we are given polynomials 
$F_1(\aaa, \x), \ldots, F_k(\aaa, \x)$ in~$K[\aaa, \x]$, 
we let~${F(\aaa, \x)=0}$ be the corresponding 
{\bf set of systems of polynomial  equations} 
parametrized by~$\aaa$, and 
the ideal generated by~$F(\aaa,\x)$ in $K[\aaa,\x]$
is denoted by $I(\aaa,\x)$. 

Let $\Sc$ be the affine scheme of the $\aaa$-parameters,
$R$ its coordinate ring, and
$\mathcal{F}$ the affine scheme $\Spec(K[\aaa,\x]/I(\aaa,\x))$.
Then there exists a morphism of schemes ${\Phi: \mathcal{F}  \To \Sc}$,
or equivalently a $K$-algebra homomorphism
$\phi: R \To K[\aaa,\x]/I(\aaa,\x)$. The morphism $\Phi$, 
and $\mathcal{F}$ itself
if the context is clear,  is called a \textbf{family 
of sub-schemes of $\mathbb{A}^m$}. 

\begin{definition}\label{independparams}
If $\Sc= \mathbb A^m$ and 
${I(\aaa,\x)\cap K[\aaa] = (0)}$  the parameters~$\aaa$ are 
said to be  {\bf independent with respect to $\mathcal{F}$}, or simply 
{\bf independent}.
\end{definition}

\begin{remark}\label{dominant}
According to the above definition, the parameters $\aaa$ are independent
if and only if the $K$-algebra homomorphism 
$\phi:K[\aaa] \To  K[\aaa,\x]/I(\aaa,\x)$ is injective. 
Therefore  the parameters~$\aaa$ are independent
if and only if the morphism 
$\Phi: \mathcal{F}\To \mathbb A^m$ is dominant.
\end{remark}

\begin{definition}\label{iflat}
Let $\f(\x)$ be a set of polynomials in $P$, and
let $F(\aaa, \x)$ define a family 
which specializes to $\f(\x)$ for a suitable choice of the parameters.
Then  let $I=(\f(\x))$, let $I(\aaa,\x) = (F(\aaa, \x))$, 
let $\mathcal{F} = \Spec(K[\aaa,\x]/I(\aaa,\x))$,
let~$\mathcal{S}=\mathbb A^m$, assume that 
the $\aaa$-parameters are independent,
and let 
$\Phi: \mathcal{F}  \To \mathbb{A}^m$ 
be the associated morphism of schemes.
A dense Zariski-open subscheme~$\U$ of~$\mathbb{A}^m$
with the property that~${\Phi^{-1}(\U) \To \U}$ is
free is said to be an~{\bf $\mathcal F$-free} 
subscheme of~$\mathbb{A}^m$ or
simply an~$\mathcal F$-free  scheme.
\end{definition}

\begin{assumption}\label{domfamily}
Let $\Phi: \mathcal{F} \To \mathbb A^m$ be a 
 family of sub-schemes of $\mathbb A^n$ parametrized 
 by~$\mathbb{A}^m$. Then  let $\sigma$  be
a term ordering on $\mathbb T^n$,
let $G_\sigma(\aaa, \x)$ be the reduced $\sigma$-Gr\"obner basis
of the extended ideal $I(\aaa, \x)K(\aaa)[\x]$, and let $d_\sigma(\aaa)$ be the 
least common multiple of all the denominators of the coefficients 
of the polynomials in $G(\aaa,\x)$.
\end{assumption}

\begin{proposition}\label{flatness}
Under Assumptions~\ref{domfamily} the open 
subscheme $\U_\sigma=\mathbb{A}^m\setminus \{d_\sigma(\aaa) = 0\}$  
of $\mathbb{A}^m$ is~$\mathcal{F}$-free.
\end{proposition}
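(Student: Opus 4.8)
The plan is to make the open set explicit and reduce the statement to a module-freeness assertion. Write $R'=K[\aaa][d_\sigma(\aaa)^{-1}]$ for the coordinate ring of $\U_\sigma=\mathbb{A}^m\setminus\{d_\sigma(\aaa)=0\}$; then $\Phi^{-1}(\U_\sigma)=\Spec(R'[\x]/I(\aaa,\x)R'[\x])$, and by Definition~\ref{iflat} the assertion that $\U_\sigma$ is $\mathcal{F}$-free is precisely the assertion that $R'[\x]/I(\aaa,\x)R'[\x]$ is a free $R'$-module. So I would aim to exhibit an explicit $R'$-basis, the natural candidate being the order ideal $\mathcal{O}_\sigma$ consisting of those $t\in\mathbb{T}^n$ not divisible by any $\LT_\sigma(g)$ with $g\in G_\sigma(\aaa,\x)$. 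Two structural facts set this up. First, by the very definition of $d_\sigma(\aaa)$ as the least common multiple of the denominators of the coefficients of $G_\sigma$, we have $G_\sigma(\aaa,\x)\subseteq R'[\x]$. Second, since $G_\sigma$ is the \emph{reduced} $\sigma$-Gr\"obner basis of $I(\aaa,\x)K(\aaa)[\x]$, each of its elements is monic, so its $\sigma$-leading coefficient is $1$, in particular a unit of $R'$; hence the division algorithm by $G_\sigma$ can be carried out entirely inside $R'[\x]$, producing remainders supported on $\mathcal{O}_\sigma$.

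The heart of the proof is to show that $G_\sigma$ generates $I(\aaa,\x)R'[\x]$ and is a $\sigma$-Gr\"obner basis of that ideal over the ring $R'[\x]$. One inclusion is routine: dividing each generator $F_j(\aaa,\x)$ of $I(\aaa,\x)$ by $G_\sigma$ inside $R'[\x]$ leaves a remainder that lies in $I(\aaa,\x)K(\aaa)[\x]$ and is reduced against the Gr\"obner basis $G_\sigma$, hence vanishes, so $F_j$ belongs to the ideal generated by $G_\sigma$ in $R'[\x]$, giving $I(\aaa,\x)R'[\x]\subseteq (G_\sigma)R'[\x]$. That the two ideals actually coincide, and that $G_\sigma$ continues to exhibit the leading-term ideal at every point of $\U_\sigma$, is where the specialization machinery enters: as in~\cite[Proposition 2.3]{BR}, the generic reduced Gr\"obner basis $G_\sigma$ evaluates, outside $\{d_\sigma(\aaa)=0\}$, to the reduced Gr\"obner basis of the corresponding fiber, so nothing degenerates when one clears denominators and passes to $R'[\x]$. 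Granting this, $G_\sigma$ is a Gr\"obner basis of $I(\aaa,\x)R'[\x]$ over $R'[\x]$, since every element of $I(\aaa,\x)R'[\x]\subseteq I(\aaa,\x)K(\aaa)[\x]$ has $\sigma$-leading term divisible by some $\LT_\sigma(g)$. It then follows that the residues of the terms of $\mathcal{O}_\sigma$ span $R'[\x]/I(\aaa,\x)R'[\x]$ over $R'$ (via the division algorithm in $R'[\x]$) and are $R'$-linearly independent there (a nontrivial $R'$-combination of distinct terms of $\mathcal{O}_\sigma$ is a nonzero polynomial reduced against $G_\sigma$, so it cannot lie in $I(\aaa,\x)K(\aaa)[\x]$). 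Hence $R'[\x]/I(\aaa,\x)R'[\x]$ is a free $R'$-module and $\U_\sigma$ is $\mathcal{F}$-free.

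The step I expect to be the genuine obstacle is the middle one: verifying that, after clearing denominators by $d_\sigma(\aaa)$, the generic Gr\"obner basis $G_\sigma$ still generates the \emph{extended} ideal $I(\aaa,\x)R'[\x]$ and still pins down the (now locally constant) leading-term ideal at every point of $\U_\sigma$ — i.e. passing from the fiberwise description of Gr\"obner bases to a statement about the single $R'$-module $R'[\x]/I(\aaa,\x)R'[\x]$. Everything else is bookkeeping with the division algorithm over $R'[\x]$; this is the one place where one must really use that $d_\sigma$ was built precisely to capture all the ``bad'' specializations, and where the argument of~\cite{BR} does the work.
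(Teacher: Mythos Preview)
Your proposal is correct and in fact supplies considerably more detail than the paper does: the paper's own proof of this proposition is nothing but a bare citation to~\cite[Proposition~2.3]{BR}. What you have written is essentially an unpacking of that cited argument --- exhibiting the order ideal $\mathcal{O}_\sigma$ as an $R'$-basis via the division algorithm over $R'[\x]$, with the specialization step (that $G_\sigma$ remains a Gr\"obner basis after inverting $d_\sigma(\aaa)$) deferred to the same source. So there is no divergence in approach; you have simply spelled out what the paper leaves to the reference, and you correctly identify the one nontrivial point (the passage from fiberwise Gr\"obner bases to the $R'$-module statement) as the place where~\cite{BR} does the real work.
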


\begin{proof}
See~\cite[Proposition 2.3]{BR}.
\end{proof}

\begin{definition}\label{restriction}
The set $G_\sigma(\aaa,\x)$ is called the {\bf universal 
reduced $\sigma$-Gr\"obner basis of $\mathcal{F}$}.
We say that $d_\sigma(\aaa)$ is 
the \hbox{\bf $\sigma$-denominator} of $\Phi$,
that $\Phi_{|d_\sigma(\aaa)}$ is the {\bf $\sigma$-free 
restriction} of $\Phi$, and that
$\U_\sigma$ is  the 
{\bf $\sigma$-free set} of the family $\mathcal{F}$. 
\end{definition}

\bigskip\goodbreak

\begin{proposition}\label{dependent}
Under Assumptions~\ref{domfamily}
the following conditions are equivalent.
\begin{enumerate}
\item[(a)] The $\aaa$-parameters are dependent 
with respect to $\mathcal{F}$.
\item[(b)] We have $I(\aaa, \x)K(\aaa)[\x] = (1)$.
\item[(c)] We have $G_\sigma(\aaa,\x) = \{1\}$.
\end{enumerate} 
\end{proposition}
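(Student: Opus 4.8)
The plan is to prove the cycle of implications $(a)\Rightarrow(b)\Rightarrow(c)\Rightarrow(a)$, using the fact that $R=K(\aaa)$ is a field and that $K(\aaa)[\x]$ is a polynomial ring over it, so the reduced Gr\"obner basis of an ideal is $\{1\}$ precisely when that ideal is the whole ring. The equivalence $(b)\Leftrightarrow(c)$ is essentially immediate from Gr\"obner basis theory: an ideal $J\subseteq K(\aaa)[\x]$ equals $(1)$ if and only if its reduced $\sigma$-Gr\"obner basis is $\{1\}$, since $1\in J$ forces the reduced basis to consist of the single monic polynomial with leading term $1$, and conversely $1\in G_\sigma(\aaa,\x)$ trivially gives $J=(1)$. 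No real work is needed there beyond citing the uniqueness of reduced Gr\"obner bases from \cite{KR2}.

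The substance is $(a)\Leftrightarrow(b)$. First I would recall (Remark~\ref{dominant}) that the $\aaa$-parameters are \emph{dependent} exactly when the $K$-algebra map $\phi:K[\aaa]\To K[\aaa,\x]/I(\aaa,\x)$ fails to be injective, i.e.\ when $I(\aaa,\x)\cap K[\aaa]\ne(0)$. So pick a nonzero $p(\aaa)\in I(\aaa,\x)\cap K[\aaa]$. In the extended ring $K(\aaa)[\x]$ the element $p(\aaa)$ is a nonzero constant, hence a unit, and it lies in $I(\aaa,\x)K(\aaa)[\x]$; therefore that extended ideal contains $1$, giving $(a)\Rightarrow(b)$. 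Conversely, if $I(\aaa,\x)K(\aaa)[\x]=(1)$, then writing $1=\sum_j h_j(\aaa,\x)F_j(\aaa,\x)$ with $h_j\in K(\aaa)[\x]$ and clearing the common denominator $q(\aaa)\in K[\aaa]\setminus\{0\}$ of all the coefficients appearing in the $h_j$, we obtain $q(\aaa)=\sum_j \tilde h_j(\aaa,\x)F_j(\aaa,\x)\in I(\aaa,\x)$ with $\tilde h_j\in K[\aaa,\x]$. Thus $0\ne q(\aaa)\in I(\aaa,\x)\cap K[\aaa]$, so the parameters are dependent, which is $(b)\Rightarrow(a)$.

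The only point requiring a little care — and the one I would flag as the main obstacle, though it is minor — is the clearing-denominators step and the bookkeeping that the resulting $q(\aaa)$ is genuinely nonzero and genuinely lies in $K[\aaa]\cap I(\aaa,\x)$: one must make sure that multiplying the identity $1=\sum_j h_jF_j$ by $q(\aaa)$ produces coefficients in $K[\aaa,\x]$ (clear), that $q(\aaa)\ne0$ since it is a finite lcm of nonzero denominators, and that the product indeed sits inside the \emph{unextended} ideal $I(\aaa,\x)=(F(\aaa,\x))$ in $K[\aaa,\x]$. Once this is in place, combining $(a)\Rightarrow(b)$, $(b)\Rightarrow(a)$, and $(b)\Leftrightarrow(c)$ closes the loop and establishes the equivalence of all three conditions.
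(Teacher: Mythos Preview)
Your proposal is correct and follows essentially the same route as the paper: the equivalence $(b)\Leftrightarrow(c)$ is dismissed as a standard fact about reduced Gr\"obner bases, and $(a)\Leftrightarrow(b)$ is handled by passing a nonzero element of $I(\aaa,\x)\cap K[\aaa]$ to a unit in $K(\aaa)[\x]$ in one direction, and by clearing a common denominator from an expression of~$1$ in the other. Your write-up is in fact a bit more explicit about the bookkeeping in the denominator-clearing step than the paper's own proof.
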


\proof
The equivalence between (b) and (c) is a standard (easy) 
fact in computer algebra, so let us prove the equivalence 
between (a) and (b).
If the $\aaa$-parameters are dependent 
with respect to~$\mathcal{F}$ then $I(\aaa, \x)\cap K[\aaa]$ 
contains a non-zero polynomial, say $f(\aaa)$. Then $I(\aaa, \x)K(\aaa)[\x] $ 
contains a non-zero constant, hence it is the unit ideal. 
Conversely, if $I(\aaa, \x)K(\aaa)[\x] = (1)$ then we may write $1$ 
as a combinations of polynomials in $I(\aaa, \x)$ with coefficients in $K(\aaa)[x]$. 
Hence there exists a common denominator, say $f(\aaa)$ 
such that $f(\aaa) = f(\aaa) \cdot 1 \in I(\aaa, \x)$, and the proof is complete.
\endproof

Let  $\Phi: \mathcal{F} \To \mathbb A^m$ be a dominant family 
of sub-schemes of $\mathbf{A}^m$.
It corresponds to a $K$-algebra 
homomorphism  ${\phi: K[\aaa] \To K[\aaa,\x]/I(\aaa,\x)}$.
As observed in Remark~\ref{dominant}, the dominance 
implies that the $\aaa$-parameters are independent, 
therefore  $\phi$ is injective.
If we fix $\bfalpha = (\alpha_1, \dots, \alpha_m)$, i.e.\ a 
rational ``parameter point'' in $\mathbb A^m$,
we get $\Spec(K[\bfalpha,\x]/I(\bfalpha,\x))$, a special fiber of $\Phi$, 
hence a special member 
of the family.  Clearly we have the equality $K[\bfalpha,\x]=K[\x]$ so that 
$I(\bfalpha,\x)$ can be seen as an ideal in $K[\x]$. With this convention
we denote the scheme $\Spec(K[\x]/I(\bfalpha,\x))$ by~$\X_{\bfalpha,\x}$.

On the other hand,  there exists another 
morphism $\Psi: \mathcal{F} \To \mathbb A^n$  which 
corresponds to the \hbox{$K$-algebra} 
homomorphism $\psi: K[\x] \To K[\aaa,\x]/I(\aaa,\x)$.
If we fix a rational point  $p=(\xi_1, \dots, \xi_n)$  
in $\mathbb A^n$, we 
get a special fiber of the morphism~$\Psi$, 
namely $\Spec(K[\aaa,p]/I(\aaa,p))$.  
Clearly we have $K[\aaa,p]=K[\aaa]$ 
so that ~$I(\aaa,p)$ can be seen as an ideal in $K[\aaa]$. 
With this convention we denote the 
scheme $\Spec(K[\aaa]/I(\aaa,p))$ by~$\Gamma_{\aaa,p}$.

\begin{definition}
Let $G=G_\sigma(\aaa,\x)$  be the universal 
reduced $\sigma$-Gr\"obner basis of $\mathcal{F}$, 
listed with $\sigma$-increasing leading terms. 
The corresponding list of non-constant 
coefficients of $G$  is denoted by  ${\rm NCC}_G$
and called the {\bf non constant coefficient list} of $G$.
Moreover, if~$\alpha \in \mathcal{U}_\sigma$ then ${\rm NCC}_G(\alpha)$ 
is the list obtained by $\alpha$-evaluating the elements  
${\rm NCC}_G$.
\end{definition}

\begin{example}\label{excoefflist}
Let $\mathcal F$ be the family of subschemes of $\mathbb A^2$ which is defined by the following ideal 
$I(\aaa, \x) = (x_1^2+a_1^2x_2-a_2,\ 
x_2^3+(a_3^2+1)x_1^2+x_1+a_1a_3x_2-1)$, and let $\sigma$ be a degree-compatible term ordering with $x_1>_\sigma x_2>_\sigma x_3$. 
Then ${\rm NCC}_G = [a_1^2, -a_2, a_3^2+1,a_1a_3]$.
\end{example}

The main property of the non constant coefficient 
list of $G_\sigma(\aaa,\x)$ is described as follows.

\begin{proposition}\label{coefflistuniqe}
Under Assumptions~\ref{domfamily}, the
correspondence between  
$\{\X_{\alpha,\x}\, \mid\,  \alpha \in \mathcal{U}_\sigma\}$ 
and~${\rm NCC}_G$ which is defined by 
sending $\X_{\alpha,\x}$ to ${\rm NCC}_G(\alpha)$ is bijective.
\end{proposition}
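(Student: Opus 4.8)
The plan is to show that the map $\alpha \mapsto {\rm NCC}_G(\alpha)$ is well-defined on the $\sigma$-free set $\mathcal{U}_\sigma$, and that it is both injective and surjective onto ${\rm NCC}_G$. Surjectivity is immediate from the definition of ${\rm NCC}_G$: every element of ${\rm NCC}_G$ arises as ${\rm NCC}_G(\alpha)$ for the $\alpha$ used to produce it, so the real content is injectivity, i.e.\ that distinct fibers $\X_{\alpha,\x}$ give distinct coefficient lists, equivalently that ${\rm NCC}_G(\alpha) = {\rm NCC}_G(\beta)$ forces $\X_{\alpha,\x} = \X_{\beta,\x}$ as subschemes of $\mathbb{A}^n$. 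Actually the statement asserts a bijection between the \emph{set} $\{\X_{\alpha,\x} \mid \alpha \in \mathcal{U}_\sigma\}$ and ${\rm NCC}_G$, so I must also check the map is well-defined as a function on that set, i.e.\ that $\X_{\alpha,\x} = \X_{\beta,\x}$ implies ${\rm NCC}_G(\alpha) = {\rm NCC}_G(\beta)$; this is the heart of the matter.

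First I would recall the key specialization fact behind Proposition~\ref{flatness}: for $\alpha \in \mathcal{U}_\sigma$ (so $d_\sigma(\alpha) \ne 0$), the specialized set $G_\sigma(\alpha,\x)$ obtained by evaluating the coefficients of the universal reduced $\sigma$-Gr\"obner basis $G_\sigma(\aaa,\x)$ at $\aaa = \alpha$ is exactly the reduced $\sigma$-Gr\"obner basis of the ideal $I(\alpha,\x) \subset K[\x]$ defining $\X_{\alpha,\x}$. Granting this, the argument becomes a uniqueness statement about reduced Gr\"obner bases. For well-definedness: if $\X_{\alpha,\x} = \X_{\beta,\x}$ then $I(\alpha,\x) = I(\beta,\x)$, so they have the same reduced $\sigma$-Gr\"obner basis; hence $G_\sigma(\alpha,\x)$ and $G_\sigma(\beta,\x)$ are literally the same list of polynomials (the reduced Gr\"obner basis is unique, once leading coefficients are normalized to $1$). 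Reading off coefficients term by term, the corresponding entries of ${\rm NCC}_G(\alpha)$ and ${\rm NCC}_G(\beta)$ agree, so ${\rm NCC}_G(\alpha) = {\rm NCC}_G(\beta)$. For injectivity: conversely, if ${\rm NCC}_G(\alpha) = {\rm NCC}_G(\beta)$, then because the monomial support of $G_\sigma(\aaa,\x)$ (the set of power products appearing, including those with constant coefficients) is fixed independently of $\aaa$, specializing at $\alpha$ and at $\beta$ produces the same list of polynomials $G_\sigma(\alpha,\x) = G_\sigma(\beta,\x)$; hence these two reduced Gr\"obner bases generate the same ideal, so $I(\alpha,\x) = I(\beta,\x)$ and $\X_{\alpha,\x} = \X_{\beta,\x}$.

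One subtlety I would be careful about is that ${\rm NCC}_G$ records only the \emph{non-constant} coefficients, so in the injectivity direction I must note that the constant coefficients of $G_\sigma(\aaa,\x)$, being elements of $K$, are unchanged under any specialization and therefore automatically coincide; combined with equality of the non-constant coefficients and of the fixed power-product support, this indeed pins down $G_\sigma(\alpha,\x) = G_\sigma(\beta,\x)$ on the nose. I expect the main obstacle to be purely expository rather than mathematical: making precise that the "list of non-constant coefficients, ordered by $\sigma$-increasing leading terms," is a well-defined finite tuple whose length and whose slots are the same for all $\alpha \in \mathcal{U}_\sigma$ — this is exactly what Proposition~\ref{flatness} guarantees, since on $\mathcal{U}_\sigma$ the leading term ideal is constant and no Gr\"obner basis element degenerates. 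Once that bookkeeping is set up, both directions are immediate from uniqueness of the reduced Gr\"obner basis, and the proof closes.
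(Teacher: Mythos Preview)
Your proposal is correct and follows essentially the same approach as the paper: both arguments rest on the fact that, over $\mathcal{U}_\sigma$, the universal reduced $\sigma$-Gr\"obner basis specializes to the reduced $\sigma$-Gr\"obner basis of each fiber, and then invoke uniqueness of the reduced Gr\"obner basis. Your write-up is more explicit than the paper's (you separate well-definedness from injectivity and address the constant-coefficient subtlety), but the underlying idea is identical.
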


\proof First we show that the universal 
reduced $\sigma$-Gr\"obner basis of $\, \mathcal{F}$ 
specializes to the reduced $\sigma$-Gr\"obner 
basis of each fiber~$\X_{\alpha,\x}$. The reason is that when we 
specialize we do not affect the leading terms and we do not 
add new elements to the support of the polynomials involved.
Then the conclusion follows from the fact that the 
reduced $\sigma$-Gr\"obner  basis  of an ideal is 
unique (see~\cite[Theorem 2.4.13]{KR1}).
\endproof

Proposition~\ref{coefflistuniqe} suggests the following definition.

\begin{definition}\label{schemefamily}
Let $\mathcal{U}_\sigma$ be the $\sigma$-free set of $\mathcal{F}$,
let $G=G_\sigma(\aaa,\x)$ be the universal 
reduced \hbox{$\sigma$-Gr\"obner} of $\mathcal{F}$, and 
let ${\rm NCC}_G$  be the non
constant coefficient list of $G$.  Then the scheme 
parametrized by ${\rm NCC}_G$ is called the {\bf $\sigma$-scheme 
of $\mathcal F$}. 
If ${\rm NCC}_G=(\frac{f_1(\aaa)}{d_1(\aaa)}, \dots, 
\frac{f_s(\aaa)}{d_s(\aaa)})$, then the $\sigma$-scheme
of $\mathcal{F}$ is represented parametrically by
$$
y_1=\frac{f_1(\aaa)}{d_1(\aaa)}, \dots, \ y_s
=\frac{f_s(\aaa)}{d_s(\aaa)}
$$
which is called the {\bf parametric representation
of the $\sigma$-scheme of $\mathcal{F}$}.
\end{definition}

\begin{example}\label{sigma scheme}
Let $\mathcal{F} = \Spec(K[\aaa,\x]/I(\aaa,\x)$ 
where $I(\aaa,\x) = (x^2+a_1^2x+a_1a_2y+a_2^2)$, 
and let~$\sigma$ be a degree-compatible term ordering. 
Then ${\rm NCC}_G = ({a_1^2}_{\mathstrut}, a_1a_2, a_2^2)$
and the $\sigma$-scheme of $\mathcal{F}$ is the affine cone $\X_\sigma$
represented by $y_1=a_1^2$,\ $y_2 = a_1a_2$,\  $y_3 = a_2^2$.
Its defining ideal in $K[y_1,y_2,y_3]$ is generated by $y_2^2-y_1y_3$,
and we have $\dim(\X_\sigma) = 2$.
\end{example}

Using Proposition~\ref{coefflistuniqe} and the theory of Gr\"obner fans
(see~\cite{MR}), we get the following result.

\goodbreak

\begin{corollary}\label{H-fan}
Let $\Phi: \mathcal{F} \To \mathbb A^m$ 
be a dominant family of  sub-schemes of $\mathbb A^n$.
\begin{enumerate}
\item[(a)] For every  term ordering $\sigma$,  
the $\sigma$-scheme of $\,\mathcal{F}$
represents the generic fibers  of $\,\mathcal{F}$.
\item[(b)] The set of $\sigma$-schemes of $\,\mathcal{F}$ is finite.
\end{enumerate}
\end{corollary}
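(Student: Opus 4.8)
The plan is to deduce Corollary~\ref{H-fan} directly from Proposition~\ref{coefflistuniqe} and from the basic theory of Gr\"obner fans. For part (a), I would argue as follows. By the proof of Proposition~\ref{coefflistuniqe}, the universal reduced $\sigma$-Gr\"obner basis $G=G_\sigma(\aaa,\x)$ specializes, for every $\alpha\in\mathcal{U}_\sigma$, to the reduced $\sigma$-Gr\"obner basis of the fiber $\X_{\alpha,\x}$; moreover the specialization affects only the coefficients and leaves the leading terms and the supports unchanged. Hence knowing $\mathrm{NCC}_G(\alpha)$ is the same as knowing the reduced $\sigma$-Gr\"obner basis of $\X_{\alpha,\x}$, which in turn determines the ideal $I(\alpha,\x)$, i.e.\ the fiber. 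Since $\mathcal{U}_\sigma$ is a dense Zariski-open subscheme of $\mathbb{A}^m$ (Proposition~\ref{flatness}), the fibers $\X_{\alpha,\x}$ for $\alpha\in\mathcal{U}_\sigma$ are exactly the generic fibers of $\mathcal{F}$, and the map $\alpha\mapsto \mathrm{NCC}_G(\alpha)$ realizes the $\sigma$-scheme as a parameter space for them; this is precisely the content of Proposition~\ref{coefflistuniqe} repackaged. So (a) is essentially a restatement, and I would keep the argument to a couple of sentences, citing Proposition~\ref{coefflistuniqe}.

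For part (b), the key input is the theory of Gr\"obner fans, in the form used in~\cite{MR}. The plan is: the $\sigma$-scheme of $\mathcal{F}$ is, by Definition~\ref{schemefamily}, determined by the pair consisting of the leading-term ideal of $I(\aaa,\x)K(\aaa)[\x]$ together with the shape (supports) of the universal reduced $\sigma$-Gr\"obner basis $G_\sigma(\aaa,\x)$; indeed $\mathrm{NCC}_G$ is read off from $G_\sigma(\aaa,\x)$, and two term orderings that give the same reduced Gr\"obner basis (as a parametrized object over $K(\aaa)$) give the same $\sigma$-scheme. Now consider the ideal $I(\aaa,\x)$ extended to $K(\aaa)[\x]$: by the Gr\"obner fan theory, as $\sigma$ ranges over all term orderings on $\mathbb{T}^n$, this ideal has only finitely many distinct reduced Gr\"obner bases (equivalently, only finitely many leading-term ideals), the combinatorial data being encoded by the finitely many maximal cones of the Gr\"obner fan of $I(\aaa,\x)K(\aaa)[\x]$. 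Each such reduced Gr\"obner basis yields one $\sigma$-scheme, so the set of $\sigma$-schemes is finite. I would phrase this as: apply~\cite{MR} to the ideal $I(\aaa,\x)K(\aaa)[\x]$ over the field $K(\aaa)$, obtain finiteness of the set of reduced Gr\"obner bases, and note that the $\sigma$-scheme depends only on which of these finitely many bases one gets.

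The main obstacle, and the only point that needs a little care, is the passage to the coefficient field $K(\aaa)$: the Gr\"obner fan is a construction over a field, and here the relevant ``field'' is $K(\aaa)$, while the $\sigma$-schemes live over $K$ via the parametrization by $\mathrm{NCC}_G$. I would address this by observing that the Gr\"obner fan of $I(\aaa,\x)K(\aaa)[\x]$ is a perfectly good finite polyhedral fan over $K(\aaa)$, that each of its maximal cones corresponds to one universal reduced $\sigma$-Gr\"obner basis $G_\sigma(\aaa,\x)$ (a finite set of polynomials with coefficients in $K(\aaa)$), and that the non-constant coefficient list $\mathrm{NCC}_G$, hence the parametric representation of the $\sigma$-scheme, is then a deterministic function of that cone. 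Consequently distinct $\sigma$-schemes can only arise from distinct maximal cones, of which there are finitely many, proving (b). The rest is bookkeeping that I would leave to the reader, citing~\cite{MR} and Proposition~\ref{coefflistuniqe}.
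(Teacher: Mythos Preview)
Your proposal is correct and follows essentially the same route as the paper's own proof: part~(a) is indeed just a restatement of Proposition~\ref{coefflistuniqe}, and part~(b) is deduced from the Gr\"obner fan theory of~\cite{MR}, which guarantees that the ideal $I(\aaa,\x)K(\aaa)[\x]$ has only finitely many reduced Gr\"obner bases. Your additional care in making explicit that the Gr\"obner fan is taken over the field $K(\aaa)$ is a welcome clarification of a point the paper leaves implicit.
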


\proof Claim (a) is a restatement of the proposition. Claim (b) 
follows from the  theory of Gr\"obner fans which entails there 
is only a finite number of reduced Gr\"obner bases
of the ideal~$I(\aaa, \x)$.
\endproof

\begin{remark}\label{notbijective}
The statement of the proposition
does not imply that there is a bijection between $\mathcal U_\sigma$ 
and ${\rm NCC}_G$, as Example~\ref{excoefflist} shows . 
For instance in that example, for $(a_1,a_2,a_3)=(1,1,1)$ and $(a_1,a_2,a_3)=(-1,1,-1)$ we get the same fiber.
The reason is that  the proposition treats
$\{\X_{\alpha,\x}\, \mid\,  \alpha \in \mathcal{U}_\sigma\}$ 
as a set. It means that if we have 
$\X_{\alpha,\x}=\X_{\alpha'\!,\x}$ we view the two fibers as a single 
element of the set.
\end{remark}

\medskip

\subsection{Hyperplane Sections and Families}
\label{HyperSecFam}

The setting of this subsection  is the following. 

\begin{assumption}\label{familyOfSch}
Let $\,\mathcal{F}$
be a family of  sub-schemes of $\mathbb A^n$ 
parametrized by the affine space~$\mathbb A^m$ and let  
$\Phi: \mathcal{F} \To \mathbb A^m$ be a dominant morphism
which corresponds to an injective $K$-algebra 
homomorphism  ${\phi: K[\aaa] \To K[\aaa,\x]/I(\aaa,\x)}$. 
\end{assumption}

\begin{assumption}\label{section}
Let $P^{\mathstrut}=K[\x]$, 
let $i \in \{1,\dots, n\}$, let $c_{i+1}, \dots, c_n, \gamma \in K$,
and let $L=x_i-\ell\ $ be the linear polynomial with
$\ell =  \sum_{j> i} c_jx_j+\gamma$ if $i<n$ , and  $\ell = \gamma$ if $i=n$.
Moreover, we let $\x_{\hat{\imath}} = (x_1, \dots, x_{i-1}, x_{i+1}, \dots, x_n)$
and  identify $K[\aaa,\x]/(L)$ with $K[\aaa,\x_{\hat{\imath}}]$ 
via the isomorphism 
induced by  $\pi_L(x_i) = \ell$, $\pi_L(x_j) = x_j$ for $j \ne i$.
\end{assumption}

\begin{notation}\label{hyperGB}
The scheme $\Spec(K[\aaa, \x_{\hat{\imath}}]/\pi_L(I(\aaa, \x))$ is  
called the \textbf{$L$-hyperplane 
section of~$\mathcal{F}$} and denoted 
by $\mathcal{F}_L$. 
The morphism $\mathcal{F}_L \To \mathbb{A}^m$ 
which corresponds to the $K$-algebra homomorphism 
$\phi_L: K[\aaa] \To K[\aaa, \x_{\hat{\imath}}]/\pi_L(I(\aaa, x))$ 
canonically induced by $\phi$, is called $\Phi_L$.
Then let~$\sigma$ be a  
term ordering such that 
$x_1\!>_\sigma\! x_2 \!>_\sigma\!  \cdots \!>_\sigma\!  
x_n$, let $G_\sigma(\aaa,\x)=(g_1(\aaa,\x), \dots, g_s(\aaa,\x))$ 
be the universal 
reduced \hbox{$\sigma$-Gr\"obner} 
of~$\,\mathcal{F}$, and let $\hat{\sigma}$ be the term 
ordering induced by $\sigma$ on $\mathbb T(x_1,\dots, x_{i-1},x_{i+1},\dots, x_n)$.
\end{notation}

\begin{proposition}\label{hyperplaneFam}
Under Assumptions~\ref{familyOfSch} and~\ref{section}, 
let $G_\sigma(\aaa,\x)=\{g_1(\aaa,\x), \dots, g_s(\aaa,\x))\}$ 
be a monic \hbox{$\sigma$-Gr\"obner} of~$I(\aaa,\x)$, and
let~$\LT_\sigma(g_j(\aaa,\x))= \LT_{\hat{\sigma}}(\pi_L(g_j(\aaa,\x)))$ for every $j=1,\dots, s$. 
\begin{enumerate}

\item[(a)] The linear polynomial $L$ does not divide zero modulo $I(\aaa,\x)$.

\item[(b)] The set  $\{\pi_L(g_1(\aaa,\x)), \dots, \pi_L(g_s(\aaa,\x))\}$ is 
a minimal $\hat{\sigma}$-Gr\"obner basis of $\pi_L(I(\aaa,\x))$.

\item[(c)] If $L=x_i-\gamma$, then 
$\{\pi_L(g_1(\aaa,\x)), \dots, \pi_L(g_s(\aaa,\x))\}$ is 
the reduced $\hat{\sigma}$-Gr\"obner basis of $\pi_L(I(\aaa,\x))$.

\item[(d)] The $\aaa$-parameters are independent with respect 
to $\mathcal{F}_L$.
\end{enumerate}
\end{proposition}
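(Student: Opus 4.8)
The plan is to derive Proposition~\ref{hyperplaneFam} as a direct consequence of the results already established for the non-homogeneous case, simply carried out over the larger coefficient field $K(\aaa)$ (or, more precisely, applied to the ideal $I(\aaa,\x)$ in $K[\aaa,\x]$ after extending scalars). First I would observe that Assumptions~\ref{section} are exactly Assumptions~\ref{stronghyperplane} with the base ring $K$ replaced by $K[\aaa]$, and that the hypotheses of the proposition match the hypotheses of Theorem~\ref{mainhyper}: $G_\sigma(\aaa,\x)$ is a monic $\sigma$-Gr\"obner basis of $I(\aaa,\x)$, and $\LT_\sigma(g_j(\aaa,\x)) = \LT_{\hat{\sigma}}(\pi_L(g_j(\aaa,\x)))$ for every $j$. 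So claims (a), (b) except for minimality, and (c) are immediate applications of Theorem~\ref{mainhyper}(a), (b), (d) respectively, reading $\aaa$ as part of the coefficients.

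For the minimality assertion in (b), I would invoke the remark made right after Example~\ref{essentialassumption}: the condition $\LT_\sigma(g_j(\aaa,\x)) = \LT_{\hat{\sigma}}(\pi_L(g_j(\aaa,\x)))$ for all $j$ is equivalent to saying that $x_i$ divides none of the leading terms of the elements of $G_\sigma(\aaa,\x)$; since $G_\sigma(\aaa,\x)$ is the reduced, hence minimal, universal $\sigma$-Gr\"obner basis of $\mathcal{F}$, its leading terms form a minimal monomial generating set of $\LT_\sigma(I(\aaa,\x))$, and because $\pi_L$ does not change those leading terms (again by the hypothesis), the set $\{\LT_{\hat{\sigma}}(\pi_L(g_j(\aaa,\x)))\}$ is still a minimal generating set. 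Thus $\pi_L(G_\sigma(\aaa,\x))$ is a minimal $\hat{\sigma}$-Gr\"obner basis, which is exactly Theorem~\ref{mainhyper}(c) transported to this setting.

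Claim (d) is where the family structure genuinely enters. I would argue as follows: $\pi_L(I(\aaa,\x)) \cap K[\aaa]$ is the kernel of the composite $K[\aaa] \to K[\aaa,\x_{\hat\imath}]/\pi_L(I(\aaa,\x))$, so I must show this composite is injective. Suppose $f(\aaa) \in \pi_L(I(\aaa,\x)) \cap K[\aaa]$. Then $f(\aaa)$ lies in the ideal generated by $\pi_L(G_\sigma(\aaa,\x))$ over $K[\aaa,\x_{\hat\imath}]$; since $\pi_L(G_\sigma(\aaa,\x))$ is a $\hat\sigma$-Gr\"obner basis, reducing $f(\aaa)$ — which is a constant in $\x_{\hat\imath}$, hence has empty support in the $x_j$'s — against this basis forces every $\LT_{\hat\sigma}(\pi_L(g_j(\aaa,\x)))$ that could be used to be a term in $K[\aaa]$; but the hypothesis $\LT_\sigma(g_j) = \LT_{\hat\sigma}(\pi_L(g_j))$ together with the standing assumption $x_1 >_\sigma \cdots >_\sigma x_n$ means each leading term genuinely involves the variables $\x$ (a Gr\"obner basis element of an ideal not containing a nonzero constant cannot have a constant leading term unless the element is a nonzero constant, which would force $1 \in G$), so the only way $f(\aaa)$ reduces is $f(\aaa)=0$. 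The cleaner route, which I would actually prefer, is to use part (a): since $L$ does not divide zero modulo $I(\aaa,\x)$, multiplication by $L$ is injective on $K[\aaa,\x]/I(\aaa,\x)$, and combined with the injectivity of $\phi: K[\aaa] \to K[\aaa,\x]/I(\aaa,\x)$ (the dominance hypothesis in Assumptions~\ref{familyOfSch}) together with the short exact sequence $0 \to K[\aaa,\x]/I(\aaa,\x) \xrightarrow{\cdot L} K[\aaa,\x]/I(\aaa,\x) \to K[\aaa,\x_{\hat\imath}]/\pi_L(I(\aaa,\x)) \to 0$, one sees that $\phi_L$ is injective, i.e.\ the $\aaa$-parameters remain independent with respect to $\mathcal{F}_L$.

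The main obstacle, such as it is, lies in claim (d): one has to be careful that ``$L$ does not divide zero modulo $I(\aaa,\x)$'' is being used over the ring $K[\aaa,\x]$ (not $K(\aaa)[\x]$), and that the exact sequence above is exact on the left precisely because of part (a) — this is the one place where the proof is not a verbatim transcription of Section~1. Everything else is bookkeeping: checking that passing from $K$ to $K[\aaa]$ as the coefficient ring does not disturb any of the leading-term combinatorics, which it does not, since $\sigma$ and $\hat\sigma$ order only the $\x$-monomials and the $\aaa$ merely ride along inside coefficients.
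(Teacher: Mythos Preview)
Your treatment of (a), (b), (c) is the paper's proof: everything is a direct transcription of Theorem~\ref{mainhyper} with $K$ replaced by $K(\aaa)$, and the minimality in (b) is exactly Theorem~\ref{mainhyper}(c) applied to the reduced (hence minimal) universal basis. The paper says just this, in one line.

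For (d), your first approach is correct and is in substance the paper's argument, but the paper packages it more cleanly via Proposition~\ref{dependent}: independence of the $\aaa$-parameters is equivalent to $G_\sigma(\aaa,\x)\ne\{1\}$, equivalently $\LT_\sigma(I(\aaa,\x))\ne(1)$; since the hypothesis forces $\LT_{\hat\sigma}(\pi_L(I(\aaa,\x)))$ to be generated by the very same non-unit monomials, Proposition~\ref{dependent} applied in the other direction gives independence for $\mathcal{F}_L$. Your reduction argument unwinds exactly this.

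Your ``preferred'' exact-sequence route, however, has a genuine gap. From the short exact sequence
\[
0 \to K[\aaa,\x]/I(\aaa,\x) \xrightarrow{\cdot L} K[\aaa,\x]/I(\aaa,\x) \to K[\aaa,\x_{\hat\imath}]/\pi_L(I(\aaa,\x)) \to 0
\]
and the injectivity of $\phi$ you cannot conclude that $\phi_L$ is injective: all the sequence tells you is that $\ker\phi_L = \phi^{-1}\bigl(L\cdot(K[\aaa,\x]/I(\aaa,\x))\bigr)$, and nothing in the hypotheses prevents a nonzero $f(\aaa)$ from being congruent to $L\cdot h$ modulo $I(\aaa,\x)$. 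To rule that out you must know that $1\notin \pi_L(I(\aaa,\x))K(\aaa)[\x_{\hat\imath}]$, which is precisely the leading-term statement you already proved in your first approach (and which the paper extracts from Proposition~\ref{dependent}). So the exact-sequence argument is not an independent route; at best it is a repackaging that still rests on the Gr\"obner-basis fact. Drop it and keep the first argument, or simply cite Proposition~\ref{dependent} twice as the paper does.
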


\proof 
Claims (a), (b), (c) follow immediately from  Theorem~\ref{mainhyper}.
To prove claim (d) we observe that the $\aaa$-parameters are 
independent with respect to $\mathcal{F}$ by assumption. 
Therefore $G_\sigma(\aaa, \x) \ne \{1\}$ by Proposition~\ref{dependent}
and so $\LT_\sigma(I(\aaa,\x)) \ne (1)$. Our assumptions imply that 
also $\LT_{\hat{\sigma}}(I(\aaa,\x)_L) \ne(1)$ 
and hence the conclusion follows from Proposition~\ref{dependent}.
\endproof

The following example shows that  without the assumption of the proposition,
even if $\Phi$ is dominant, $\Phi_L$ needs not be such.

\begin{example}\label{not dominant}
Let $\mathcal{F}$ be the family of sub-schemes 
of $\mathbb{A}^4$ defined by 
${I(\aaa,\x) \!=\! (x^2\!-\!a_1y,\ y^2\!-\!a_2)}$. 
We check that $I(\aaa, \x)\cap K[\aaa] = (0)$,
so we conclude that the parameters are independent. 
However if $L = x-y$,  then $\mathcal{F}_L$ is defined
 by  $I(\aaa,\x)_L = (y^2-a_1y,\ y^2-a_2)$
and we have the following 
equality $I(\aaa, \x)_L\cap K[\aaa] = (a_1^2a_2-a_2^2)$
which means that the parameters with 
respect to  $\mathcal{F}_L$ are not independent anymore.
\end{example}

An easy  consequence of the proposition is that the 
non-constant coefficient list of~$G_{\hat{\sigma}}(\aaa,\x)_L$ 
is easily deduced from 
the non-constant coefficient list of $G_\sigma(\aaa, \x)$. Let us have a look at
an example which illustrates the proposition.

\begin{example}\label{coefflistsection}
Let $\mathcal{F}$ be the sub-scheme of $\mathbb A^7$ 
defined by the ideal $I(\aaa, \x)$ generated by the two polynomials
$$
F_1 = a_1xy-a_2y^2-w, \quad
F_2= a_2x^2+a_3y^2+z^2
$$
We pick a degree-compatible term ordering $\sigma$ with the property that
$x>_\sigma y>_\sigma z >_\sigma w$, and 
let $F_3=y^3+\frac{a_1^2}{a_2^3+a_1^2a_3}yz^2
+\frac{a_1a_2}{a_2^3+a_1^2a_3}xw+\frac{a_2^2}{a_2^3+a_1^2a_3}yw$.
Then $G_\sigma(\aaa,\x) = \{\frac{1}{a_1}F_1, \frac{1}{a_2}F_2, F_3\}$
is the universal reduced $\sigma$-Gr\"obner basis of~$\mathcal{F}$.
Therefore 
{\small
$$
{\rm NCC}_{G_\sigma(\aaa,\x)}=
\Big(-\frac{a_2}{a_1},\ -\frac{1}{a_1},\ \frac{a_3}{a_2}, \frac{1}{a_2},\ 
\frac{a_1^2}{a_2^3+a_1^2a_3},\  \frac{a_1a_2}{a_2^3+a_1^2a_3},\ 
 \frac{a_1^2}{a_2^3+a_1^2a_3}
\Big)
$$
}
The set of the leading terms of $G_\sigma(\aaa, \x)$ is $\{xy,\ x^2, \ y^3\}$ and
if we let $\ell = c_1w+c_2$ 
with $c_1, c_2 \in K$, $L = z-\ell$, then
claim (b) of the proposition implies that 
the substitution of $z$ with $\ell$ in 
$G_\sigma(\aaa, \x)$ produces a minimal 
$\hat{\sigma}$-Gr\"obner basis of~$\mathcal{F}_L$. 
For instance if $\ell = w-1$ we get  the equality 
$\pi_L(G_{\hat{\sigma}}(\aaa,\x))=\{\frac{1}{a_1}F_1,\  
\frac{1}{a_2}(a_2x^2+a_3y^2+(w-1)^2,\  \overline{F}_3\}$
where 
{\small
$$\overline{F}_3= 
y^3+\frac{a_1^2}{a_2^3+a_1^2a_3}y(w-1)^2
+\frac{a_1a_2}{a_2^3+a_1^2a_3}xw+\frac{a_2^2}{a_2^3+a_1^2a_3}yw
$$
$$
=y^3+\frac{a_1^2}{a_2^3+a_1^2a_3}yw^2
+\frac{a_1a_2}{a_2^3+a_1^2a_3}xw
+\frac{a_2^2-2a_1^2}{a_2^3+a_1^2a_3}yw
+ \frac{a_1^2}{a_2^3+a_1^2a_3} y
$$
}
It turns out that this is the reduced Gr\"obner basis, 
consequently we get the  equality
{\small
$${\rm NCC}_G = \Big(-\frac{a_2}{a_1}, \  -\frac{1}{a_1}, \ \frac{a_3}{a_2}, 
\ \frac{1}{a_2}, \ \frac{a_1^2}{a_2^3+a_1^2a_3}, \frac{a_1a_2}{a_2^3+a_1^2a_3},\ 
\frac{a_2^2-2a_1^2}{a_2^3+a_1^2a_3},\  \frac{a_1^2}{a_2^3+a_1^2a_3}
\Big)
$$
}
If we compute the elimination of $[x,y,z,w]$ from the 
ideal $(F_1,F_2)$ we get $(0)$, hence the parameters are independent. 
And if we compute the $\sigma$-scheme of $\mathcal{F}$ 
we get a scheme isomorphic to~$\mathbb{A}^3$.
\end{example}

\begin{remark}\label{similartheorems}
As we have seen, Proposition~\ref{hyperplaneFam}  is almost 
entirely based on Theorem~\ref {mainhyper}. Analogously, one can use Theorem~\ref{lifting} and Theorem~\ref{commonliftingGroebner} to deduce similar theorems in the case of families. This easy task is left to the reader.
\end{remark}
%
%
%
%
%
%

\subsection{The Hough Transform and its Dimension}

We recall the definition of Hough 
transform (see~\cite[Definition 3.11]{BR}).

\begin{definition}\label{Hough}{\bf (The Hough Transform)}\\
With the above notation and definitions, let
${p=(\xi_1, \dots, \xi_n)\in  \mathbb{A}^n}$.  
Then the scheme $\Gamma_{\!\aaa, p}$ 
 is said to be the {\bf Hough transform} of the point $p$ 
 with respect to the family $\Phi$.
If it is clear from the context, we simply say that the 
scheme $\Gamma_{\!\aaa, p}$ is
the {\bf Hough transform} of the point $p$ and we {\bf denote it by ${\rm H}_p$}.
We observe that if $p \notin {\rm Im}(\Psi)$, 
then ${\rm H}_p=\emptyset$.
\end{definition}

Hough transforms were invented by P.V.C. Hough (see~\cite{H}). Here we show an example which illustrates the original idea.

\begin{example}\label{line-points}
Let $\mathcal{F}$ be the hypersurface of $\mathbb{A}^4$ defined 
by the equation $x_2+a_1x_1+a_2=0$. It correspond to the $K$-algebra 
homomorphism $K[a_1,a_2] \To K[a_1,a_2,x_1,x_2]/(x_2+a_1x_1+a_2)$.
We have the following diagram
\begin{center}
\begin{tikzpicture}
  \node (F) {$\mathcal{F}$};
  \node (P) [below of=F, left of=F] {$\mathbb{A}^2_{(a_1, a_2)}$};
  \node (A) [right of=P, right of=P] {$\mathbb{A}^2_{(x_1,x_2)}$};
  \draw[->] (F) to node {$\quad ^\Psi$} (A);
  \draw[->] (F) to node {$\!\!\!\!\!^\Phi$} (P);
\end{tikzpicture}
\end{center}
\noindent
It is easy to check that $\dim(\mathcal{F})=3$ and that $\Phi$ and $\Psi$ are
dominant. It is clear that the Hough transform of
the point $(\xi_1, \xi_2)$ is the straight line in the parameter space defined by the equation $\xi_2+\xi_1a_1+a_2=0$. If some points, say  $p_1, p_2, \dots, p_s$, have Hough transforms which intersect in a point, say $(\alpha_1, \alpha_2)$, 
it means that the line 
$x_2+\alpha_1x_1+\alpha_2=0$ contains $p_1, p_2,\dots,p_s$.
Using this idea, Hough was able to detect line segments, and similarly arcs,
inside images.

\end{example}

Next, we show an  example where
$\Phi$ is dominant but $\Psi$ is not. 
\begin{example}\label{notdominant}
Let $\mathcal{F}$ be the sub-scheme of $\mathbb{A}^4$ defined 
by the ideal 
$$I=(x_1^2-x_1, \ \ x_1x_2-x_2, \  \ 
x_2^2+a_1a_2x_1-(a_1+a_2)x_2)$$
If we draw the diagram, it looks exactly the same as 
the diagram of Example~\ref{line-points}, 
but there are several differences.
It is easy to check that $\dim(\mathcal{F})=2$ and that $\Phi$ 
is dominant.
However, if we perform the elimination of $[a_1,a_2]$ we get 
the ideal  $(x_1^2-x_1, \ x_1x_2-x_2)$, which means that $\Psi$ 
is not dominant. 
In particular, the closure of the image of $\Psi$ is the union of the  
point $(0,0)$ and the line $x_1-1=0$ . We observe that the fiber 
of $\Psi$ over the point $(0,0)$ is the plane defined 
by $x_1=x_2=0$ while the fibers over the
points on $x=1$ are pairs of lines defined 
by $x_1=1,\ \ x_2 = c,\ \ (c-a_1)(c-a_2)=0$.
\end{example}

\medskip
The above example justifies the reason why in the next 
proposition we need to consider the image of $\, \Psi$.

\bigskip\goodbreak
\begin{proposition}\label{dimensionoffiber}
{\bf (Dimension of Hough Transforms)}\\
Let~$\mathbb{Y} \subseteq \mathbb{X}$ 
be an irreducible component of the closure 
of the image of $\, \Psi$,
let~$p$ be the generic point
of~$\mathbb{Y}$, and let 
$\X_{\bfalpha,\x}$ be the generic fiber of $\Phi$. 
\begin{enumerate}
\item[(a)]  
$\dim\, ({\rm H}_p) =  \dim\, (\mathcal{F})-  \dim\,(\mathbb{Y}) =
m+\dim(\X_{\bfalpha,\x})- \dim\,(\mathbb{Y})$.

\item[(b)]  If $\,\Psi$ is dominant and $\dim(\mathcal{F})=m$, 
then $\dim({\rm H}_p)=0$. 

\item[(c)] If $\dim({\rm H}_p)=0$ and the generators of $\,I$ are linear 
polynomials in the parameters $\aaa$, 
then~${\rm H}_p$ is a single rational point.
\end{enumerate}
\end{proposition}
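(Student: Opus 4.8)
The plan is to work fiber-dimension-theoretically via the morphism $\Psi:\mathcal{F}\to\mathbb{A}^n$, since ${\rm H}_p=\Gamma_{\!\aaa,p}$ is by definition the fiber $\Psi^{-1}(p)=\Spec(K[\aaa]/I(\aaa,p))$. For claim (a), I would first restrict $\Psi$ to the irreducible closed subscheme $\overline{\Psi^{-1}(\mathbb{Y})}$ of $\mathcal{F}$ lying over $\mathbb{Y}$; call it $\mathcal{F}_{\mathbb{Y}}$. Since $\mathbb{Y}$ is an irreducible component of $\overline{{\rm Im}(\Psi)}$ and $p$ is its generic point, the restricted morphism $\mathcal{F}_{\mathbb{Y}}\to\mathbb{Y}$ is dominant with irreducible source (after passing to a suitable irreducible component of $\mathcal{F}$ dominating $\mathbb{Y}$), so the fiber-dimension theorem gives $\dim({\rm H}_p)=\dim(\mathcal{F}_{\mathbb{Y}})-\dim(\mathbb{Y})$ over the generic point $p$. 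The content is then that $\dim(\mathcal{F}_{\mathbb{Y}})=\dim(\mathcal{F})$: this follows because $\mathbb{Y}\subseteq\overline{{\rm Im}(\Psi)}$ is a component, so some irreducible component of $\mathcal{F}$ of top dimension dominates $\mathbb{Y}$ (otherwise $\mathbb{Y}$ would be contained in the image of a lower-dimensional piece and could be absorbed into another component of the image). For the second equality in (a), I use that $\Phi:\mathcal{F}\to\mathbb{A}^m$ is dominant (Assumption/Remark on independence of parameters), so over the generic point $\bfalpha$ of $\mathbb{A}^m$ the generic fiber $\X_{\bfalpha,\x}$ has dimension $\dim(\mathcal{F})-m$, whence $\dim(\mathcal{F})=m+\dim(\X_{\bfalpha,\x})$; substitute.

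For claim (b): if $\Psi$ is dominant then $\mathbb{Y}=\mathbb{X}=\mathbb{A}^n$ (or the relevant top-dimensional component of it), so $\dim(\mathbb{Y})=n$; but wait — here I should be careful, since $\dim(\mathbb{Y})$ in (a) is the dimension of a component of $\overline{{\rm Im}(\Psi)}\subseteq\mathbb{A}^n$, and dominance forces this to be all of $\mathbb{A}^n$, so $\dim(\mathbb{Y})=n$. Hmm, but then (a) would give $\dim({\rm H}_p)=\dim(\mathcal{F})-n$, not obviously $0$ from $\dim(\mathcal{F})=m$. I think the intended reading is different: $\Psi$ dominant and $\dim(\mathcal{F})=m$ together with $\dim(\mathcal{F})=m+\dim(\X_{\bfalpha,\x})$ force $\dim(\X_{\bfalpha,\x})=0$, i.e.\ the generic fiber of $\Phi$ is finite; then one argues that the generic fiber of $\Psi$ is also finite. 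Concretely, plug $\dim(\mathcal{F})=m$ into the first equality of (a): $\dim({\rm H}_p)=m-\dim(\mathbb{Y})$. So I need $\dim(\mathbb{Y})=m$. Since $\mathbb{Y}\subseteq\mathbb{A}^n$ and there is no a priori relation between $m$ and $n$, the key observation must be that $\Psi$ dominant plus $\dim(\mathcal{F})=m$ forces $\dim(\mathbb{Y})=\dim(\overline{{\rm Im}\Psi})=n$ and simultaneously forces the generic fiber of $\Psi$ to have dimension $\dim(\mathcal{F})-n$; combined with $\Phi$ dominant giving $\dim(\mathcal{F})\geq m$ with equality, and the existence of \emph{some} point with $0$-dimensional Hough transform... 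I would instead argue directly: $\dim(\mathcal{F})=m$ and $\Phi$ dominant means $\Phi$ is generically finite, so the generic fiber $\X_{\bfalpha,\x}$ is a finite set of points. A finite fiber of $\Phi$ corresponds, via the diagram, to the statement that the generic member of the family is a $0$-dimensional scheme in $\mathbb{A}^n$, i.e.\ finitely many points $\xi$; dualizing, the generic fiber of $\Psi$ over such a $\xi$ is then a finite set in parameter space — this is a standard transposition of finiteness across a correspondence of pure dimension $m$ — hence $\dim({\rm H}_p)=0$ for $p$ generic in the image.

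For claim (c): assume $\dim({\rm H}_p)=0$ and that every generator $F_k(\aaa,\x)$ of $I$ is affine-linear in $\aaa$, say $F_k=\sum_j a_j\,\lambda_{kj}(\x)+\mu_k(\x)$. Then $I(\aaa,p)\subseteq K[\aaa]$ is generated by affine-linear polynomials in the $a_j$ with coefficients $\lambda_{kj}(p),\mu_k(p)\in K$ — i.e.\ ${\rm H}_p$ is an affine linear subvariety of $\mathbb{A}^m$, the solution set of a linear system over $K$. A linear system over a field has solution set that is either empty or an affine subspace of dimension equal to $m-\mathrm{rank}$; since we are told $\dim({\rm H}_p)=0$ and (as $p\in{\rm Im}(\Psi)$) ${\rm H}_p\neq\emptyset$, this affine subspace is a single point, and being cut out by linear equations over $K$ it is $K$-rational. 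This is the cleanest of the three parts.

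The main obstacle is claim (b) — specifically pinning down exactly what $\dim(\mathbb{Y})$ equals under the hypotheses and making the "transpose finiteness across $\Psi$" step rigorous; I expect the honest route is to note that $\dim(\mathcal{F})=m$ with $\Phi$ dominant gives $\dim(\X_{\bfalpha,\x})=0$ via the second equality of (a), that $\Psi$ dominant gives $\dim(\mathbb{Y})=n$ is \emph{not} what's needed, but rather that one re-derives $\dim({\rm H}_p)$ by applying the fiber-dimension argument to $\Psi$ restricted over $\mathbb{Y}$: $\dim({\rm H}_p)=\dim(\mathcal{F})-\dim(\mathbb{Y})$, and since $\Psi$ dominant makes $\overline{{\rm Im}(\Psi)}$ pure of some dimension with $\mathbb{Y}$ a top component, one shows $\dim(\mathbb{Y})=\dim(\mathcal{F})$ exactly when the generic fiber is finite — which holds by the $\dim(\X_{\bfalpha,\x})=0$ computation combined with a symmetry of the bi-fibration $\mathbb{A}^m\leftarrow\mathcal{F}\rightarrow\mathbb{A}^n$. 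I would present (b) as a corollary of (a) once this dimension bookkeeping is laid out carefully, rather than as an independent argument.
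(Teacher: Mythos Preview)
Your arguments for (a) and (c) are essentially the paper's. For (a) the paper simply cites the generic-fiber-dimension theorem (\cite[Corollary 14.5]{E}) applied once to $\Psi$ over $\mathbb{Y}$ and once to $\Phi$ over $\mathbb{A}^m$; your restriction to $\mathcal{F}_{\mathbb{Y}}$ and worry about which component of $\mathcal{F}$ dominates $\mathbb{Y}$ are extra scaffolding around the same citation, not a different idea. For (c) the paper says exactly what you say: $I(\aaa,p)$ is generated by affine-linear forms in $\aaa$, so ${\rm H}_p$ is the solution set of a linear system over $K$, hence a single $K$-rational point once it is nonempty and zero-dimensional.

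The gap is in (b). The paper's proof is a one-line substitution into (a): $\Psi$ dominant is asserted to give $\mathbb{Y}=\mathbb{X}=\mathbb{A}^m$, hence $\dim(\mathbb{Y})=m$, hence $\dim({\rm H}_p)=m-m=0$. You actually arrive at this mechanism yourself (``plug $\dim(\mathcal{F})=m$ into the first equality of (a): $\dim({\rm H}_p)=m-\dim(\mathbb{Y})$. So I need $\dim(\mathbb{Y})=m$'') and then abandon it because $\Psi$ has target $\mathbb{A}^n$, not $\mathbb{A}^m$. What follows --- the ``transpose finiteness across the bi-fibration'' paragraph --- is not a proof: you never establish $\dim(\mathbb{Y})=\dim(\mathcal{F})$, and the ``symmetry'' you invoke is left as a slogan. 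So your proposal for (b) does not close. Your hesitation about $m$ versus $n$ is not unreasonable given how the paper is written, but the intended argument is the direct substitution, and your alternative route never reaches a conclusion.
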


\proof
In the proof we use the notation Kdim to indicate the Krull dimension.
To prove (a) we observe that 
we have the equality $\dim\,(\mathcal{F}) = {\rm Kdim}\, \big(K[\aaa,\x]/I(\aaa,\x)\big)$.
Then  we let~$\mathfrak{p}$ be the prime ideal which defines $\mathbb{Y}$
so that $\dim\,(\mathbb{Y}) = {\rm Kdim}\,(K[\aaa]/\mathfrak{p})$.
Since $\dim\, ({\rm H}_p)$ and $\dim\,(\X_{\bfalpha,\x})$ 
are the Krull dimensions of the fibers of $\Psi$ and $\Phi$ respectively, the claim 
follows from~\cite[Corollary 14.5]{E}.
To prove claim (b)  we observe that if $\,\Psi$ is dominant 
then $\mathbb{Y} =\mathbb{X} =\mathbb{A}^m$, 
hence $\dim(\mathbb{Y})=m$,
so we have $\dim({\rm H}_p)= m-m=0$.
Claim (c) follows from (b) and the fact that the coordinates of the 
points in ${\rm H}_p$ are the solutions of a linear system.
\endproof

Let us have a look at some examples.

\begin{example}\label{vertebral}
Let $\mathcal{F}$ be the sub-scheme of $\mathbb A^5$ 
defined by the ideal $I$ generated by the two polynomials
$$
F_1 = (x^2+y^2)^3-(a_1\,(x^2+y^2)-a_2\,(x^3-3xy^2))^2; 
\quad F_2 = a_1z-a_2x.
$$
If we pick a degree-compatible term ordering $\sigma$ such 
that  $z>_\sigma y>_\sigma x$, then $\LT_\sigma(F_1) = y^6$, 
${\LT_\sigma(F_2) = z}$ if $a_1\ne 0$,
and $\{F_1, \frac{1}{a_1}F_2\}$ is the reduced Gr\"obner basis 
of $I$. Using Proposition~\ref{flatness}, we 
get ${\mathcal{U}_\sigma = \mathbb{A}^2 \setminus\{a_1=0\}}$ 
and we see that 
 $\Phi^{-1}(\mathcal{U}_\sigma) \To \mathcal{U}_\sigma$ is free. 
 If we perform the elimination of $[a_1, a_2]$ we get the zero ideal, 
 hence also $\Psi$ is dominant, actually surjective.
 Counting dimensions as suggested by the proposition, we see that 
$ \dim\, (\Gamma_{\!\aaa, p}) = 0$ for the generic fiber. Since $a_1,a_2$ 
are quadratic and related by a linear equation,  the 
Hough transforms of the points in $\mathbb A^3$ are pairs of points. 
For instance,  if we pick the point $p = (1,1,1)$, its Hough transform 
is the pair of points $(\frac{1}{\sqrt{2}}, 1)$, 
$(-\frac{1}{\sqrt{2}}, -\frac{1}{\sqrt{2}})$.
\end{example}

\begin{example}\label{vertebral2}
We modify the above example in the following way.
Let $\mathcal{F}$ be the sub-scheme of~$\mathbb A^5$ defined by the 
ideal $I$ generated by the two polynomials
$$
F_1 = (x^2+y^2)^3-a_1\,((x^2+y^2)-(x^3-3xy^2))^2; 
\quad F_2 = z-a_2x.
$$
If we pick a degree-compatible term ordering $\sigma$ such 
that  $z>_\sigma y>_\sigma x$, then $\LT_\sigma(F_1) = y^6$, 
${\LT_\sigma(F_2) = z}$,
and $\{F_1, F_2\}$ is the reduced Gr\"obner basis 
of $I$. Using Proposition~\ref{flatness}  we see that  $\Phi$ is free. 
 If we perform the elimination of $[a_1, a_2]$ we get the zero ideal, 
 hence also $\Psi$ is dominant, actually surjective.
 Counting dimensions as suggested by the proposition, we see that 
$ \dim\, (\Gamma_{\!\aaa, p}) = 0$ for the generic fiber. 
Up to here the situation is similar to the above example. 
But now the two parameters $a_1, a_2$ are linear in the 
polynomials $F_1, F_2$, hence 
the Hough transforms of the generic point in $\mathbb A^3$ is a 
single point as described in the proposition.
It has coordinates $\alpha_1, \alpha_2$ where
$$\alpha_1 = 
\frac{(x^2+y^2)^3}{((x^2+y^2)-(x^3-3xy^2))^2}  \qquad 
\alpha_2 =  \frac{z}{x}
$$

\end{example}

\subsection{Hyperplane Sections and Hough Transforms}
\label{HyperSecHough}
As we have seen in Examples~\ref{zitrus}, \ref{sectparams}, and
Remark~\ref{similartheorems}, there is a concrete possibility of reconstructing 
ideals from this hyperplane sections. In particular, it is interesting to be able to
reconstruct a surface from a set of planar curves obtained by slicing it.
Here we show an example which suggests how to do it.

\begin{example}\label{recHough}
Suppose we want to reconstruct a surface using  five images 
obtained by slicing it with the hyperplanes $z=0$, $z=1$, $z=-1$, $z=2$. Suppose that a priori we know that the images contain curves of the family
$x^3-a_1y^2+a_2x+a_3y+a_4=0$. 
Using the Hough transforms of the points of the image, 
we discover these curves. They are described by the ideals
$(z,\ x^3-y^2)$, $(z-1,\ x^3 -y^2 -x -y -1)$, $(z+1,\ x^3 -y^2 +x +y +1)$, 
$(z-2,\ x^3 -y^2 -2x -2y -2)$. We proceed as we suggested in 
Example~\ref{zitrus} and reconstruct the surface. 
Its equation is $x^3-xz-y^2-yz-z=0$.
\end{example}

Why could this reconstruction be important?
Suppose we have the images of several parallel sections 
of a human organ, which is exactly what happens
with various types of tomography. 
Then we try to identify the cross-sectional
curves using Hough transforms. 
Once we have the equations of these curves,
even for a small portion of the organ, we can try to reconstruct 
the equation of the whole surface using ideas  
outlined in the above example. 
This hot topic is under investigation.

\goodbreak


\addcontentsline{toc}{section}{\quad\ \ References}

\centerline{\bf References}

\bibliographystyle{elsarticle-num}

\begin{thebibliography}{00}

\bibitem{Co}
CoCoATeam,
\cocoa: a system for doing Computations in Commutative Algebra.\\
Available at http://cocoa.dima.unige.it.

\bibitem{H} P.V.C.\ Hough, \textit{Method and means for recognizing 
complex patterns},
US Patent 3069654, December 18, (1962).


\bibitem{DH} O.\ Duda, P.E.\ Hart, \textit{Use of the Hough Transformation 
to Detect Lines and Curves in Pictures}, 
Comm. ACM, \textbf{15}, 11--15, (1972).

\bibitem{KR1} M.\ Kreuzer,  L.\ Robbiano,  \textit{Computational
Commutative Algebra 1}, Springer, Heidelberg, (2000).

\bibitem{KR2} M.\ Kreuzer,  L.\ Robbiano, \textit{Computational
Commutative Algebra 2}, Springer, Heidelberg, (2005).

\bibitem{BR} M.C.\ Beltrametti and L. Robbiano, 
\textit{An algebraic approach to Hough transforms}
J. Algebra \textbf{371}, 669--681 (2012) 

\bibitem{E} D.\ Eisenbud, \textit{Commutative Algebra with a View
Toward Algebraic Geometry}, Graduate Texts in Mathematics, 
Springer, (1995).

\bibitem{R} L.\ Robbiano, \textit{Parametrizations, hyperplane sections, Hough transforms}, {\tt arXiv:1305.0478}

\bibitem{MR} T.\ Mora, L.\ Robbiano, \textit{The Gr\"obner fan of an ideal},
J. Symbolic Comput. \textbf{6}, 183--208 (1988).

\end{thebibliography}

\end{document}